\newcommand{\nc}{\newcommand}
\nc{\fg}{\mathfrak{f} } \nc{\vg}{\mathfrak{v} } \nc{\wg}{\mathfrak{w} }
\nc{\zg}{\mathfrak{z} } \nc{\ngo}{\mathfrak{n} } \nc{\kg}{\mathfrak{k} }
\nc{\mg}{\mathfrak{m} } \nc{\bg}{\mathfrak{b} } \nc{\ggo}{\mathfrak{g} }
\nc{\ggob}{\overline{\mathfrak{g}} } \nc{\sog}{\mathfrak{so} }
\nc{\sug}{\mathfrak{su} } \nc{\spg}{\mathfrak{sp} } \nc{\slg}{\mathfrak{sl} }
\nc{\glg}{\mathfrak{gl} } \nc{\cg}{\mathfrak{c} } \nc{\rg}{\mathfrak{r} }
\nc{\hg}{\mathfrak{h} } \nc{\tg}{\mathfrak{t} } \nc{\ug}{\mathfrak{u} }
\nc{\dg}{\mathfrak{d} } \nc{\ag}{\mathfrak{a} } \nc{\pg}{\mathfrak{p} }
\nc{\sg}{\mathfrak{s} } \nc{\affg}{\mathfrak{aff} } \nc{\qg}{\mathfrak{q} }
\nc{\pca}{\mathcal{P}} \nc{\nca}{\mathcal{N}} \nc{\lca}{\mathcal{L}}
\nc{\oca}{\mathcal{O}} \nc{\mca}{\mathcal{M}} \nc{\tca}{\mathcal{T}}
\nc{\aca}{\mathcal{A}} \nc{\cca}{\mathcal{C}} \nc{\gca}{\mathcal{G}}
\nc{\sca}{\mathcal{S}} \nc{\hca}{\mathcal{H}} \nc{\bca}{\mathcal{B}}
\nc{\dca}{\mathcal{D}} \nc{\val}{\operatorname{val}}
\nc{\vp}{\varphi} \nc{\ddt}{\frac{d}{dt}} \nc{\dds}{\frac{d}{ds}}
\nc{\dpar}{\frac{\partial}{\partial t}} \nc{\im}{\mathrm{i}}
\nc{\SO}{\mathrm{SO}} \nc{\Spe}{\mathrm{Sp}} \nc{\Sl}{\mathrm{SL}}
\nc{\SU}{\mathrm{SU}} \nc{\Or}{\mathrm{O}} \nc{\U}{\mathrm{U}} \nc{\Gl}{\mathrm{GL}}
\nc{\Se}{\mathrm{S}} \nc{\Cl}{\mathrm{Cl}} \nc{\Spein}{\mathrm{Spin}}
\nc{\Pin}{\mathrm{Pin}} \nc{\G}{\mathrm{GL}_n(\RR)} \nc{\g}{\mathfrak{gl}_n(\RR)}
\nc{\RR}{{\Bbb R}} \nc{\HH}{{\Bbb H}} \nc{\CC}{{\Bbb C}} \nc{\ZZ}{{\Bbb Z}}
\nc{\FF}{{\Bbb F}} \nc{\NN}{{\Bbb N}} \nc{\QQ}{{\Bbb Q}} \nc{\PP}{{\Bbb P}} \nc{\OO}{{\Bbb O}}
\nc{\vs}{\vspace{.2cm}} \nc{\vsp}{\vspace{1cm}} \nc{\ip}{\langle\cdot,\cdot\rangle}
\nc{\ipp}{(\cdot,\cdot)} \nc{\la}{\langle} \nc{\ra}{\rangle} \nc{\unm}{\tfrac{1}{2}}
\nc{\unc}{\tfrac{1}{4}} \nc{\und}{\tfrac{1}{16}} \nc{\no}{\vs\noindent}
\nc{\lam}{\Lambda^2(\RR^n)^*\otimes\RR^n} \nc{\tangz}{{\rm T}^{\rm Zar}}
\nc{\nor}{{\sf n}}  \nc{\mum}{/\!\!/} \nc{\kir}{/\!\!/\!\!/}
\nc{\Ri}{\tfrac{4\Ric_{\mu}}{||\mu||^2}} \nc{\ds}{\displaystyle}
\nc{\ben}{\begin{enumerate}} \nc{\een}{\end{enumerate}} \nc{\f}{\frac}
\nc{\lb}{[\cdot,\cdot]} \nc{\isn}{\tfrac{1}{||v||^2}}
\nc{\gkp}{(\ggo=\kg\oplus\pg,\ip)} \nc{\ukh}{(\ug=\kg\oplus\hg,\ip)}
\nc{\tgkp}{(\tilde{\ggo}=\kg\oplus\pg,\ip)}
\nc{\wt}{\widetilde} \nc{\mm}{M}
\nc{\iop}{\mathtt{i}} \nc{\jop}{\mathtt{j}}
\nc{\Hess}{\operatorname{Hess}} \nc{\ad}{\operatorname{ad}}
\nc{\Ad}{\operatorname{Ad}} \nc{\rank}{\operatorname{rank}}
\nc{\Irr}{\operatorname{Irr}} \nc{\End}{\operatorname{End}}
\nc{\Aut}{\operatorname{Aut}} \nc{\Inn}{\operatorname{Inn}}
\nc{\Der}{\operatorname{Der}} \nc{\Ker}{\operatorname{Ker}}
\nc{\Iso}{\operatorname{Iso}} \nc{\Diff}{\operatorname{Diff}}
\nc{\Lie}{\operatorname{L}} \nc{\tr}{\operatorname{tr}} \nc{\dif}{\operatorname{d}}
\nc{\sen}{\operatorname{sen}} \nc{\modu}{\operatorname{mod}}
\nc{\CRic}{\operatorname{PP}} \nc{\Cric}{\operatorname{P}} \nc{\Ricci}{\operatorname{Ric}} \nc{\Sec}{\operatorname{Sec}}
\nc{\sym}{\operatorname{sym}} \nc{\herm}{\operatorname{herm}} \nc{\symac}{\operatorname{sym^{ac}}}
\nc{\symc}{\operatorname{sym^{c}}} \nc{\scalar}{\operatorname{sc}}
\nc{\grad}{\operatorname{grad}} \nc{\ricci}{\operatorname{Rc}}
\nc{\Nor}{\operatorname{Norm}}  \nc{\ricc}{\operatorname{Rc^{c}}}
\nc{\Ricc}{\operatorname{Ric^{c}}} \nc{\ricac}{\operatorname{Rc^{ac}}}
\nc{\Ricac}{\operatorname{Ric^{ac}}} \nc{\Riem}{\operatorname{Rm}}
\nc{\riccig}{\operatorname{ric^{\gamma}}} \nc{\Rin}{\operatorname{M}}
\nc{\Le}{\operatorname{L}} \nc{\tang}{\operatorname{T}}
\nc{\level}{\operatorname{level}} \nc{\rad}{\operatorname{r}}
\nc{\abel}{\operatorname{ab}} \nc{\CH}{\operatorname{CH}} \nc{\Cone}{\operatorname{C}} \nc{\CCone}{\operatorname{CC}}
\nc{\mcc}{\operatorname{mcc}} \nc{\Adj}{\operatorname{Adj}}
\nc{\Order}{\operatorname{O}}  \nc{\inj}{\operatorname{inj}} \nc{\proy}{\operatorname{pr}}
\nc{\vol}{\operatorname{vol}} \nc{\Diag}{\operatorname{Dg}} \nc{\Diagg}{\operatorname{Diag}}
\nc{\Spec}{\operatorname{Spec}} \nc{\Ima}{\operatorname{Im}} \nc{\Rea}{\operatorname{Re}}
\nc{\spann}{\operatorname{span}}
\theoremstyle{plain}
\newtheorem{theorem}{Theorem}[section]
\newtheorem{proposition}[theorem]{Proposition}
\newtheorem{corollary}[theorem]{Corollary}
\newtheorem{lemma}[theorem]{Lemma}
\theoremstyle{definition}
\newtheorem{definition}[theorem]{Definition}
\theoremstyle{remark}
\newtheorem{remark}[theorem]{Remark}
\newtheorem{example}[theorem]{Example}
\title{On Ricci negative solvmanifolds and their nilradicals}
\author{Jonas Der\'e} \author{Jorge Lauret}
\address{KU Leuven Kulak, E. Sabbelaan 53, BE-8500 Kortrijk, Belgium}
\email{jonas.dere@kuleuven-kulak.be}
\address{Universidad Nacional de C\'ordoba, FaMAF and CIEM, 5000 C\'ordoba, Argentina}
\email{lauret@famaf.unc.edu.ar}
\thanks{The first author is supported by a postdoctoral fellowship of the Research Foundation - Flanders (FWO).  The second author was partially supported by grants from CONICET, FONCYT and SeCyT (Universidad Nacional de C\'ordoba). }
\begin{document}

\maketitle

\begin{abstract}
In the homogeneous case, the only curvature behavior which is still far from being understood is Ricci negative.  In this paper, we study which nilpotent Lie algebras admit a Ricci negative solvable extension.  Different unexpected behaviors were found.  On the other hand, given a nilpotent Lie algebra, we consider the space of all the derivations such that the corresponding solvable extension has a metric with negative Ricci curvature.  Using the nice convexity properties of the moment map for the variety of nilpotent Lie algebras, we obtain a useful characterization of such derivations and some applications.
\end{abstract}

\tableofcontents

\section{Introduction}\label{intro}

There are no topological obstructions on a differentiable manifold $M$ to the existence of a complete Riemannian metric with negative Ricci curvature (see \cite{Lhk}).  However, in the presence of a Lie group $G$ acting transitively on $M$, it is natural to expect a nice interplay between any prescribed curvature behavior of $G$-invariant metrics and not only the topology of $M$ but also the algebraic structure of $G$.

Back in 1974, Heintze \cite{Hnt} (see also \cite{AznWls}) proved that any homogeneous Riemannian manifold with $\Sec<0$ is isometric to a metric on a simply connected solvable Lie group (any metric on a Lie group is assumed to be left-invariant from now on) satisfying the following strong structural property: the nilradical $\ngo$ of its Lie algebra $\sg$ has codimension one and there is an element $Y\in\sg$ such that the derivation $\ad{Y}|_{\ngo}$ of $\ngo$ is {\it positive}, in the sense that its real semisimple part $\ad{Y}|_{\ngo}^\RR$, which is also in $\Der(\ngo)$, has its eigenvalues (i.e.\ the real parts of the eigenvalues of $\ad{Y}|_{\ngo}$) all positive.  Conversely, any solvable Lie group of this kind admits a metric with $\Sec<0$.  Surprisingly (or not), the obvious question of which nilpotent Lie algebras admit a positive derivation is still wide open.  We will show in this paper that such a problem seems to be hopeless.

The stronger pinching condition $-4\leq\Sec\leq-1$ was studied by Eberlein-Heber \cite{EbrHbr}; for instance, they showed that one additionally needs $\ngo$ $2$-step nilpotent (or abelian) and $\Spec\left(\ad{Y}|_{\ngo}^\RR\right)\subset [1,2]$.  On the other hand, concerning the weaker condition $\Sec\leq 0$, it was proved by Azencott-Wilson \cite{AznWls} (see also \cite{Wlf,Alk}) that the only homogeneous examples (up to isometry) are still simply connected solvable Lie groups.  Here the orthogonal complement $\ag$ of the nilradical $\ngo$ in $\sg$ can be of dimension $>1$, though the conditions $[\ag,\ag]=0$ and $\ad{Y}|_{\ngo}^\RR\geq 0$ must hold, among other more technical conditions.

In the homogeneous case, the only curvature behavior which is still not understood is $\Ricci<0$ (see e.g.\ \cite[Introduction]{NklNkn}).  In the 1980s, Dotti-Leite-Miatello \cite{Dtt,DttLt, DttLtMtl} proved that the only unimodular Lie groups that can admit a $\Ricci<0$ metric are the non-compact semisimple ones and showed that most of non-compact simple Lie groups indeed have one, with some low dimensional exceptions, including $\Sl_2(\CC)$, $\Spe(2,\RR)$ and $G_2$ (non-compact).  The existence of $\Ricci<0$ metrics on such groups is still open, the only solved case is $\Sl_2(\RR)$, where the non-existence easily follows (see e.g.\ \cite{Mln}).  It was proved by Jablonski-Petersen \cite{JblPtr} that a semisimple Lie group admitting a metric with $\Ricci<0$ can not have compact factors, i.e.\ it is of non-compact type.  Recall that topologically, any (connected) Lie group is a product $K\times\RR^m$, where $K$ is its maximal compact subgroup.

More recently, in 2016, unexpected examples of Lie groups admitting $\Ricci<0$ metrics which are neither semisimple nor solvable were constructed by Will \cite{Wll1, Wll2}.  The Levi factors of some of these examples are compact, including $\SU(n)$ ($n\geq 2$) and $\SO(n)$ ($n\geq 3$), and therefore four of the nine topologies missed by the semisimple examples in \cite{DttLtMtl} are attained: $K\times\RR^m$ for $K$ equal to $\SU(2)$, $\SU(3)$, $\SO(5)$ or $\SO(7)$.
The cases in which $K$ is $S^1$, $\Spe(3)$, $\Spe(4)$, $\Spe(5)$ or $G_2$ remain open.  It is worth pointing out that the homogeneous space $\SO^+(n,2)/\SO(n)$ ($n\geq 2$), which is homeomorphic to $S^1\times\RR^k$, does admit an invariant metric with $\Ricci<0$ (see \cite[Example 1]{Nkn}).  On the other hand, a general construction in \cite{Wll2} gives that any non-compact semisimple Lie group admitting a $\Ricci<0$ metric can be the Levi factor of a non-semisimple Lie group with a $\Ricci<0$ metric.  Non-abelian nilradicals are possible in most of these Will's constructions.  All this shows that an algebraic characterization of Lie groups having a $\Ricci<0$ metric is out of reach at the moment.

The study of the solvable case was also recently initiated by Nikolayevsky-Nikonorov \cite{NklNkn} in 2015.  They obtained the following sufficient condition on a solvable Lie group $S$ to admit a metric with $\Ricci<0$:
\begin{equation}\label{NNsuf}
\mbox{There exists $Y\in\sg$ such that $\ad{Y}|_\ngo^\RR>0$,     }
\end{equation}
where $\ngo$ is the nilradical of $\sg$.  Note that the nilradicals involved in these examples are the same as those needed for $\Sec<0$, although the condition $[\ag,\ag]=0$ is not mandatory here as in the case of $\Sec\leq 0$.  Also a necessary condition was found in \cite{NklNkn}:
\begin{equation}\label{NNnec}
\mbox{There exists $Y\in\sg$ such that $\tr{\ad{Y}}>0$ and $\ad{Y}|_{\zg(\ngo)}^\RR>0$,    }
\end{equation}
where $\zg(\ngo)$ is the center of $\ngo$.

We note that all the structural conditions on a solvable Lie group related to the existence of negative sectional or Ricci curvature metrics have the same flavor, motivating the following fundamental question:
\begin{quote}
Which nilpotent Lie algebras can be the nilradical of some solvable Lie algebra admitting a $\Ricci<0$ metric?
\end{quote}

Such a Lie algebra will be called a {\it Ricci negative nilradical} (RN-nilradical for short).  Since the existence of a positive derivation is sufficient, even for $\Sec<0$, any nilpotent Lie algebra which is $2$-step or has dimension $\leq 6$ is a RN-nilradical.

In Section \ref{RNnil}, we first show that for a nilpotent Lie algebra, the existence of a derivation of positive trace is a condition that is stronger than admitting a non-trivial diagonalizable derivation, and that this is in turn stronger than the property of having only nilpotent derivations.  Secondly, we use condition \eqref{NNnec} (the only obstruction known) to exhibit many explicit examples of nilpotent Lie algebras which are not RN-nilradicals.  They all have a derivation of positive trace and the following characteristics are obtained (for the first three examples any diagonalizable derivation has a zero eigenvalue on the center):
\begin{itemize}
  \item $\dim{\ngo}=8$.

  \item $\ngo$ is $3$-step nilpotent.

  \item A continuous family of pairwise non-isomorphic algebras of dimension $13$.

  \item $\ngo$ has a non-singular derivation but any diagonalizable derivation has a negative eigenvalue on the center.
\end{itemize}

On the contrary, we show that the fact that any diagonalizable derivation of $\ngo$ has a negative eigenvalue is not an obstacle for $\ngo$ to be a RN-nilradical.  All this suggests that, as in the study of Einstein nilradicals (see e.g.\ \cite{cruzchica}), the search for new sufficient or necessary general conditions is a challenging problem.

In the light of the results obtained in \cite{NklNkn,Nkl} in the general case as well as in the particular cases of Heisenberg and filiform Lie algebras as nilradicals, a complete characterization of solvable Lie algebras admitting $\Ricci<0$ metrics is expected to take the following form:
\begin{quote}
  There exists $Y\in\sg$ such that $\ad{Y}|_{\ngo}^\RR$ belongs (up to automorphism conjugation) to certain open and convex cone in the maximal torus of derivations of the nilradical $\ngo$ of $\sg$.
\end{quote}

We study this problem in Section \ref{RNder}.  At the core of this question one has the following situation.  Given a nilpotent Lie algebra $\ngo$, each $D\in\Der(\ngo)$ defines a solvable Lie algebra $\sg_D=\RR f\oplus\ngo$ given as the semi-direct sum such that $\ad{f}|_{\ngo}=D$.  We call $D$ {\it Ricci negative} if $\tr{D}>0$ and $\sg_D$ admits a $\Ricci<0$ metric such that $D^t=D$ and $f\perp\ngo$.  Note that any $D>0$ is Ricci negative (see \eqref{NNsuf}) and $D|_{\zg(\ngo)}>0$ is a necessary condition (see \eqref{NNnec}).  The following natural questions arise:
\begin{quote}
Given a basis $\{ e_i\}$ of $\ngo$, what kind of set is the cone of Ricci negative diagonal derivations? Is it open in the space of diagonal derivations?  Is it convex?
\end{quote}

We prove that a diagonal derivation $D$ is Ricci negative if and only if $D$ belongs to certain open and convex cone depending on $D$ (see Corollary \ref{main}).  Our main tool is the moment map for the $\Gl(\ngo)$-action on the variety of nilpotent Lie algebras, which is known from real geometric invariant theory to satisfy nice convexity properties (see \cite{HnzSch}).  In the case when the basis $\{ e_i\}$ is nice (see Definition \ref{nice-def}), a particularly neat characterization of Ricci negative derivations is given.  As an application, we obtain that any nilpotent Lie algebra of dimension $7$ having a non-nilpotent derivation is a RN-nilradical (see Theorem \ref{dim7}).  More applications are developed in the forthcoming paper \cite{RNder}.

\section{Preliminaries}\label{preli}

\subsection{The representation $\lam$}\label{V-sec}
We consider the space of all skew-symmetric algebras of dimension $n$, which is parameterized by the vector space
$$
V:=\lam.
$$
There is a natural linear action of $\G$ on $V$ given by $g\cdot\mu:=g\mu(g^{-1}\cdot,g^{-1}\cdot)$, for all $g\in\G$, $\mu\in V$, whose derivative defines the $\g$-representation on $V$,
$$
E\cdot\mu=E\mu(\cdot,\cdot)-\mu(E\cdot,\cdot)-\mu(\cdot,E\cdot),\qquad E\in\g,\quad\mu\in V.
$$
We note that $E\cdot\mu=0$ if and only if $E\in\Der(\mu)$, the Lie algebra of derivations of the algebra $\mu$.  Let $\tg^n$ denote the set of all diagonal $n\times n$ matrices.  If $\{ e^1,...,e^n\}$ is the basis of $(\RR^n)^*$ dual to the canonical basis $\{ e_1,...,e_n\}$, then
$$
\{ \mu_{ijk}:=(e^i\wedge e^j)\otimes e_k : 1\leq i<j\leq n, \; 1\leq k\leq n\}
$$
is a basis of $V$ of weight vectors for the above representation.  Note that $\mu_{ijk}$ is actually the bilinear form on $\RR^n$ defined by
$\mu_{ijk}(e_i,e_j)=-\mu_{ijk}(e_j,e_i)=e_k$ and zero otherwise.  The corresponding weights are given by
$$
F_{ij}^k:=E_{kk}-E_{ii}-E_{jj}\in\tg^n, \qquad i<j,
$$
where $E_{rs}$ denotes as usual the matrix whose only nonzero coefficient is $1$ at entry $rs$.  The structural constants $c(\mu)_{ij}^k$ of an
algebra $\mu\in V$ are then given by
$$
\mu(e_i,e_j)=\sum_{k}c(\mu)_{ij}^k\, e_k,
\qquad \mu=\sum_{i<j,\,k} c(\mu)_{ij}^k\, \mu_{ijk}.
$$
We consider the Weyl chamber of $\g$ defined by
$$
\tg^n_+:=\left\{\Diag(a_1,\dots,a_n):a_1\leq...\leq a_n\right\},
$$
and the open cone
$$
\tg^n_{>0}:=\left\{\Diag(a_1,\dots,a_n):a_i>0\right\}.
$$
The canonical inner product $\ip$ on $\RR^n$ determines $\Or(n)$-invariant inner products on $V$ and $\g$ making of $\{ \mu_{ijk}\}$ and $\{ E_{ij}\}$ orthonormal bases, respectively.  All these inner products will also be denoted by $\ip$.

\subsection{Moment map}\label{mm-sec}
The moment map (or $\G$-gradient map) from geometric invariant theory (see e.g.\ \cite{HnzSchStt,HnzSch,BhmLfn} for further information) for the above representation is the $\Or(n)$-equivariant map
$$
m:V\smallsetminus\{ 0\}\longrightarrow\sym(n),
$$
defined implicitly by
\begin{equation}\label{defmm}
\la m(\mu),E\ra=\tfrac{1}{|\mu|^2}\left\langle E\cdot\mu,\mu\right\rangle, \qquad \mu\in
V\smallsetminus\{ 0\}, \quad E\in\sym(n).
\end{equation}
We are using $\g=\sog(n)\oplus\sym(n)$ as a Cartan decomposition, where $\sog(n)$ and $\sym(n)$ denote the subspaces of skew-symmetric and symmetric matrices, respectively.  Note that $m$ is also defined on the projective space $\PP(V)$.

\subsection{Convex subsets}\label{convex-sec}
Let $W$ be a real vector space endowed with an inner product.  A compact and convex subset $E$ of $W$ is called a {\it convex body} and a subset $F\subset E$ is said to be a {\it face} of $E$ if it is convex and for each pair of points $x,y\in E$ such that the {\it relative interior} (i.e.\ the interior as a subset of the generated affine subspace) of the segment $[x,y]$ meets $F$ one has that $[x,y]\subset F$.  An {\it extreme point} of $E$ is a point which is a face and a face $F$ of $E$ is called {\it exposed} if there exists $\alpha\in W$ such that
$$
F=\left\{ x\in E:\la x,\alpha\ra=\max\{\la y,\alpha\ra:y\in E\}\right\}.
$$
Given a subset $X\subset W$, its {\it convex hull} is defined by
$$
\CH(X):=\left\{ a_1x_1+\cdots+a_kx_k:x_i\in X, \; a_i\geq 0, \; \sum a_i=1, \; k\in\NN\right\}.
$$
Any convex body is the convex hull of its extreme points and also the disjoint union of the relative interiors of its faces.  The convex hull of two disjoint disks of same radius in $\RR^2$ is an example of a convex body with four non exposed extreme points.

If $X$ is a finite subset, say $X=\{ x_1,\dots,x_n\}$, $\CH(X)$ is called a {\it convex polytope}.  In this case, all the faces of $\CH(X)$ are exposed and it is easy to see that its relative interior is given by
$$
\CH^\circ(x_1,\dots,x_n):=\left\{ a_1x_1+\cdots+a_nx_n:a_i>0, \; \sum a_i=1\right\}.
$$
A subset $C\subset W$ is called a {\it cone} if $rx\in W$ for any $r>0$, $x\in W$.

\subsection{Convexity properties of the moment map}\label{git}
In \cite{HnzSch, BllGhgHnz}, many nice and useful results on the convexity of the image of the moment map have been obtained.  In order to apply these results to our $\Gl_n(\RR)$-representation $V=\lam$ (see Section \ref{preli}), we use the notation in such articles and set
$$
U:=\U(n), \quad U^\CC=\Gl_n(\CC), \quad Z:=\PP(\Lambda^2(\CC^n)^*\otimes\CC^n).
$$
Thus $\PP(V)$ is a $\Gl_n(\RR)$-invariant closed subset of $Z$.  For any compatible subgroup $G\subset\Gl_n(\RR)$, one has that $K:=G\cap\Or(n)$ is a maximal compact subgroup of $G$, $\ggo=\kg\oplus\pg$ is a Cartan decomposition and $G=K\exp{\pg}$, where $\pg:=\ggo\cap\sym(n)$ and $\ggo$, $\kg$ denote the Lie algebras of $G$, $K$, respectively.  Consider $\ag\subset\pg$, a maximal abelian subalgebra.  Thus the corresponding torus $A=\exp{\ag}\subset G$ is also a compatible subgroup.

The moment map $m:V\smallsetminus\{ 0\}\longrightarrow\pg$ for the $G$-action is given by composing the moment map \eqref{defmm} for the $\Gl_n(\RR)$-action with the orthogonal projection from $\sym(n)$ to $\pg$, and the one for the $A$-action, $m_\ag:V\smallsetminus\{ 0\}\longrightarrow\ag$, by projecting on $\ag$.  Let $\ag_+\subset\ag$ denote a Weyl chamber of $G$.

We now consider closed $G$-invariant subsets of $\PP(V)$.  A subset $X\subset\PP(V)$ is called {\it irreducible} if it is a real semi-algebraic subset whose real algebraic Zariski closure is irreducible (see \cite{HnzSch}).  Note that the projection on $\PP(V)$ of any orbit closure $\overline{G\cdot\mu}$ is an irreducible subset if $G$ is connected.

\begin{theorem}\label{conv1}\cite{HnzSch}
Let $X$ be a closed $G$-invariant subset of $\PP(V)$.
\begin{itemize}
\item[(i)] $m(X)\cap\ag$ is the union of finitely many convex polytopes;

\item[(ii)] $m(X)\cap\ag_+$ is a convex polytope if $X$ is irreducible.

\item[(iii)] $m_\ag(X)$ is a convex polytope if $X$ is irreducible.
\end{itemize}
\end{theorem}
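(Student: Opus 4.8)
This is an instance of the general convexity theory for gradient maps of compatible subgroups of real reductive groups, and my plan is to fit the present concrete situation into that framework and then quote the abstract results. Complexify first: with $Z=\PP(\Lambda^2(\CC^n)^*\otimes\CC^n)$ carrying its Fubini--Study K\"ahler structure, $U=\U(n)$ acts by K\"ahler isometries with the standard moment map, the $\G$-action extends holomorphically to a $U^\CC=\Gl_n(\CC)$-action, and $\PP(V)$ is a $\G$-invariant, real-algebraic (hence Zariski-closed) subset of $Z$. The subgroup $G\subset\G$ is \emph{compatible} in the required sense: $K=G\cap\Or(n)=G\cap\U(n)$ is maximal compact, $\pg=\ggo\cap\sym(n)$ lies inside the space of Hermitian matrices $\im\,\ug(n)$, the Cartan decomposition $\ggo=\kg\oplus\pg$ is aligned with $\ug(n)=\sog(n)\oplus\im\,\sym(n)$, and the torus $A=\exp\ag$ is compatible as well. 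A direct comparison with \eqref{defmm} shows that the map of Section~\ref{mm-sec}, post-composed with the orthogonal projections onto $\pg$ and onto $\ag$, is exactly the restriction to $\PP(V)$ of the $G$- and $A$-gradient maps of $(Z,U)$; thus the abstract machinery applies.

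Granting this, (ii) and (iii) are the real Kirwan-type and real abelian convexity statements for gradient maps, and I would quote them directly from \cite{HnzSch} (see also \cite{BllGhgHnz}): for an irreducible $G$-invariant $X$ one gets that $m(X)\cap\ag_+$ and $m_\ag(X)$ are convex polytopes, typically by first establishing the abelian statement and then bootstrapping to the Weyl-chamber one. These are the genuinely hard inputs. As in the symplectic setting, their proofs rest on a Morse-theoretic analysis of the component functions $\la m(\cdot),\xi\ra$ on $X$ --- using real-analyticity and the {\L}ojasiewicz gradient inequality to force convergence of the (negative) gradient flow and, crucially, connectedness of the fibres of $m_\ag$ --- together with the local normal form of the gradient map along its critical set; irreducibility of $X$ is precisely what makes these connectedness arguments go through, and the fact that $V=\lam$ is a rational representation of the algebraic group $\Gl_n$ (so the action is algebraic) is what promotes ``compact convex'' to ``convex polytope''.

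Part (i) then follows from (ii) by a Weyl-group symmetrization. Since $m$ is $\Or(n)$-equivariant and $X$ is $G$-invariant, hence $K$-invariant, the image $m(X)$ is $\Ad(K)$-invariant and compact (as $\PP(V)$ is compact); therefore $m(X)\cap\ag$ is invariant under the little Weyl group $W=N_K(\ag)/Z_K(\ag)$, and as $\ag_+$ is a fundamental domain for $W$ on $\ag$ we obtain $m(X)\cap\ag=W\cdot\big(m(X)\cap\ag_+\big)$. For $X$ irreducible this exhibits $m(X)\cap\ag$ as the union of the at most $|W|$ convex polytopes $w\cdot\big(m(X)\cap\ag_+\big)$ given by (ii); for a general closed $G$-invariant (semi-algebraic) $X$ one reduces to this case using that the real Zariski closure of $X$ has only finitely many irreducible components.

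The main obstacle is, of course, the convexity assertions invoked in the second step. Unlike the compact symplectic case one cannot simply restrict a complex moment map, and the non-compactness of $G$ means one must control the gradient flow near infinity; this is exactly where the real-algebraicity hypotheses and the results of \cite{HnzSch,BllGhgHnz} do the essential work. The remaining ingredients --- the complexification, the compatibility verifications, and the Weyl-symmetrization for (i) --- are routine.
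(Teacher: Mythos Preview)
Your proposal is correct and matches the paper's treatment: the theorem is not proved in the paper but quoted from \cite{HnzSch}, and the only additional remarks the paper makes are exactly the two you single out---that (iii) is (ii) applied to $G=A$, and that for irreducible $X$ one has the Weyl-group decomposition $m(X)\cap\ag=\bigcup_{k\in W}k\cdot(m(X)\cap\ag_+)$ (equation \eqref{unionW}). Your sketch simply expands on the background behind the cited result, which is appropriate here.
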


In particular, $m(\overline{G\cdot\mu})\cap\ag_+$ and $m_\ag(\overline{A\cdot\mu})$ are both convex polytopes for any $\mu\in V$.  Note that part (iii) is just part (ii) applied to $G=A$ and that if $W:=N_K(\ag)/C_K(\ag)$ is the corresponding Weyl group, then for $X$ irreducible one has that
\begin{equation}\label{unionW}
m(X)\cap\ag = \bigcup\limits_{k\in W} k\cdot(m(X)\cap\ag_+).
\end{equation}

Let $X^A$ denote the set of $A$ fixed points in $X$.

\begin{theorem}\label{conv2}\cite{BllGhgHnz}
Let $X$ be a closed $G$-invariant subset of $\PP(V)$.
\begin{itemize}
\item[(i)] $m_\ag(X^A)$ is a finite set whose convex hull is $\CH(m_\ag(X))$; in particular, $\CH(m_\ag(X))$ is a convex polytope
(see \cite[Proposition 3.1]{BllGhgHnz}).

\item[(ii)] $\CH(m(X))\cap\ag=\CH(m_\ag(X))$ and $\CH(m(X)) = K\cdot \CH(m_\ag(X))$ (see \cite[Lemma 1.1]{BllGhgHnz}).

\item[(iii)] All faces of $\CH(m(X))$ are exposed (see \cite[Theorem 0.3]{BllGhgHnz}).
\end{itemize}
\end{theorem}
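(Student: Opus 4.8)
The plan is to deduce all three statements from the torus convexity recorded in Theorem~\ref{conv1}(iii), together with Kostant's linear convexity theorem for the orthogonal projection $\proy_\ag:\pg\to\ag$. After conjugating $G$ by an element of $\Or(n)$ we may assume $\ag\subseteq\tg^n$, so that the $\mu_{ijk}$ are $A$-weight vectors, the distinct restricted weights $\lambda_{ijk}:=\proy_\ag(F_{ij}^k)$ form a finite subset of $\ag$, and distinct $A$-weight spaces of $V$ are mutually orthogonal for $\ip$. For \emph{part (i)}: $\PP(V)^A$ consists of the classes $[v]$ with $v$ in a single $A$-weight space, and by \eqref{defmm} the map $m_\ag$ is constant on each weight space, equal to the corresponding $\lambda_{ijk}$, so $m_\ag(X^A)$ is finite. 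The inclusion $\CH(m_\ag(X^A))\subseteq\CH(m_\ag(X))$ is clear. For the reverse, fix $p\in X$ and set $Y:=\overline{A\cdot p}$, a closed, $A$-invariant, irreducible subset of $X$; then $m_\ag(Y)$ is a convex polytope by Theorem~\ref{conv1}(iii) applied with $G=A$. Let $v$ be a vertex of $m_\ag(Y)$ and choose $\beta\in\ag$ with $\{x\in m_\ag(Y):\la x,\beta\ra=\max\}=\{v\}$; perturbing $\beta$ slightly we may in addition assume it regular, i.e. with $\lambda_{ijk}(\beta)$ pairwise distinct, so that $\exp(\RR\beta)$ and $A$ have the same fixed points in $\PP(V)$. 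Pick $q\in Y$ with $m_\ag(q)=v$. For a lift $\wt q$ of $q$, the function $t\mapsto\log|\exp(t\beta)\wt q|$ is convex (by the orthogonality of weight spaces) with derivative $t\mapsto\la m_\ag(\exp(t\beta)q),\beta\ra$, so this pairing is non-decreasing; moreover $q_\infty:=\lim_{t\to+\infty}\exp(t\beta)q$ exists, lies in $Y$, and — being fixed by $\exp(\RR\beta)$, hence by $A$ — lies in $X^A$. Passing to the limit gives $\la m_\ag(q_\infty),\beta\ra\ge\la v,\beta\ra$, while $m_\ag(q_\infty)\in m_\ag(Y)$ forces the opposite inequality, so $m_\ag(q_\infty)=v$ and $v\in m_\ag(X^A)$. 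Thus every vertex of $m_\ag(Y)$, and in particular $m_\ag(p)\in m_\ag(Y)$, lies in $\CH(m_\ag(X^A))$; letting $p$ vary gives $\CH(m_\ag(X))=\CH(m_\ag(X^A))$, which is a convex polytope since $m_\ag(X^A)$ is finite.

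For \emph{part (ii)} I would start from the $K$-equivariance $m(k\cdot\mu)=\Ad(k)\,m(\mu)$, immediate from \eqref{defmm}; it makes $m(X)$, hence $\CH(m(X))$, invariant under $\Ad(K)$, and the same identity for $\dot w\in N_K(\ag)$ shows $m_\ag(X)$, hence $Q:=\CH(m_\ag(X))$, invariant under the Weyl group $W=N_K(\ag)/C_K(\ag)$. If $x\in\CH(m(X))\cap\ag$, write $x=\sum a_i\,m(p_i)$ and apply $\proy_\ag$: since $\proy_\ag x=x$ and $\proy_\ag\circ m=m_\ag$, one gets $x=\sum a_i\,m_\ag(p_i)\in\CH(m_\ag(X))$. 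Conversely, for $p\in X$ write $m(p)=\Ad(k)b$ with $b\in\ag$; then $b=\Ad(k^{-1})m(p)=m(k^{-1}\!\cdot p)\in m(X)$, and $w\cdot b=m(\dot w k^{-1}\!\cdot p)\in m(X)$ for each $w\in W$, so $W\cdot b\subseteq m(X)$ and Kostant's theorem yields $m_\ag(p)=\proy_\ag(\Ad(k)b)\in\CH(W\cdot b)\subseteq\CH(m(X))$. Hence $\CH(m(X))\cap\ag=\CH(m_\ag(X))$. Finally, any $y\in\CH(m(X))$ equals $\Ad(k)a$ with $k\in K$, $a\in\ag$; then $a=\Ad(k^{-1})y\in\CH(m(X))\cap\ag=\CH(m_\ag(X))$, so $y\in K\cdot\CH(m_\ag(X))$, and the reverse inclusion is the $\Ad(K)$-invariance of $\CH(m(X))$.

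For \emph{part (iii)}, put $C:=\CH(m(X))$; by (i)--(ii), $C\cap\ag=Q$ is a $W$-invariant polytope and $C=\Ad(K)\cdot Q=\Ad(K)\cdot P$ with $P:=Q\cap\ag_+$ a polytope. Given a face $F\ne C$ of $C$, pick $x_0\in\operatorname{relint}F$; using $\pg=\bigcup_{k\in K}\Ad(k)\ag_+$ we replace $F$ and $x_0$ by suitable $\Ad(k)$-translates so that $x_0\in\ag_+$. As the intersection of the face $F$ with the affine subspace $\ag$, the set $F_0:=F\cap\ag$ is a face of $Q=C\cap\ag$, and it still contains $x_0$ in its relative interior, so $F_0$ is an \emph{exposed} face of the polytope $Q$. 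The remaining task is to promote an $\ag$-functional exposing $F_0$ in $Q$ to a functional $\alpha\in\pg$ exposing $F$ in all of $C$. The key tool is Kostant's inequality $\la\Ad(k)b,\alpha\ra\le\la b,\alpha\ra$ for $b,\alpha\in\ag_+$, which shows that for dominant $\alpha$ one has $\max_C\la\cdot,\alpha\ra=\max_Q\la\cdot,\alpha\ra$ and that the $\la\cdot,\alpha\ra$-maximal face of $C$ is assembled from the exposed face of $Q$ determined by $\alpha$ and from the transverse action of the stabilizer $K_\alpha:=\{k\in K:\Ad(k)\alpha=\alpha\}$. It therefore suffices to pick a dominant $\alpha$ whose exposed face in $Q$ is $F_0$ and whose stabilizer $K_\alpha$ is large enough to recover $F$ from $F_0$. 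The main obstacle is exactly the existence of such an $\alpha$: one must first pin down the structure of the faces of $C$ --- that $F$ is the closure of $\Ad(K_\alpha)\cdot F_0$ for a suitable dominant $\alpha$ fixing $\operatorname{aff}(F_0)$ pointwise --- and only then does this $\alpha$ expose $F$; this last step is \cite[Theorem~0.3]{BllGhgHnz}.
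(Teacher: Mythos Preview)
The paper does not prove Theorem~\ref{conv2} at all: each item is simply quoted from \cite{BllGhgHnz} with a pinpoint citation, so there is no ``paper's own proof'' to compare against. Your arguments for parts (i) and (ii) are therefore genuinely more than what the paper offers, and they are correct. In (i) the flow argument along a regular $\beta\in\ag$ to push a vertex of $m_\ag(\overline{A\cdot p})$ onto $m_\ag(X^A)$ is sound (the limit $q_\infty$ exists in $\PP(V)$ and lands in $X^A$ precisely because $\beta$ separates the restricted weights), and the appeal to Theorem~\ref{conv1}(iii) to know $m_\ag(\overline{A\cdot p})$ is a polytope is exactly what is needed. In (ii) the two inclusions via $\proy_\ag$ and Kostant's linear convexity theorem are standard and correctly assembled; the small claim that $x_0\in\operatorname{relint}(F)\cap\ag$ implies $x_0\in\operatorname{relint}(F\cap\ag)$ in (iii) is also fine (this is the general fact $\operatorname{relint}(C\cap H)=\operatorname{relint}(C)\cap H$ whenever the right-hand side is nonempty).

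Part (iii), however, is not a proof. You correctly reduce to finding a dominant $\alpha\in\ag_+$ that simultaneously exposes $F_0$ in $Q$ and has stabilizer $K_\alpha$ large enough that $\overline{\Ad(K_\alpha)\cdot F_0}=F$, and you correctly identify this as the heart of the matter --- but then you explicitly write ``this last step is \cite[Theorem~0.3]{BllGhgHnz}'', which is exactly the assertion (iii) you are trying to establish. So the argument is circular at the decisive point. The missing content is nontrivial: one must analyze how the faces of an $\Ad(K)$-invariant convex body $C=K\cdot Q$ are built from faces of $Q$ and orbits of centralizers $K_\alpha$, and show that a functional exposing $F_0$ in $Q$ can be chosen in the closed Weyl chamber with the \emph{right} wall pattern so that its $\la\cdot,\alpha\ra$-maximal set in $C$ is exactly $F$ and not a larger face. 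That structural result is the substance of \cite[Theorem~0.3]{BllGhgHnz}; without reproducing it, (iii) remains a sketch rather than a proof.
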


\section{Ricci negative derivations}\label{RNder}

Given a nilpotent Lie algebra $\ngo$, each basis $\{ e_1,\dots, e_n\}$ of $\ngo$ identifies the vector space $\ngo$ with $\RR^n$, bringing the whole setting described in Section \ref{preli}, which will be used from now on without any further mention.  When an inner product is given on $\ngo$, one can use any orthonormal basis.  In this way, the Lie bracket $\lb$ of $\ngo$ becomes a vector in $V$, the orbit $\G\cdot\lb$ consists of those Lie brackets on $\ngo$ which are isomorphic to $\lb$ and the set $\nca$ of all nilpotent Lie brackets is a $\G$-invariant algebraic subset of $V$.  Note that each $\mu\in\nca$ determines a Riemannian manifold; namely, the Lie group $(N_{\mu},\ip)$ endowed with the left-invariant metric defined by $\ip$.  A remarkable fact is that the moment map from Section \ref{mm-sec} encodes geometric information; indeed
\begin{equation}\label{mmR}
m(\mu)=\tfrac{4}{|\mu|^2}\Ricci_{\mu},
\end{equation}
where $\Ricci_\mu$ is the Ricci operator of $(N_{\mu},\ip)$ (see e.g.\ \cite{alek}).

Each $D\in\Der(\ngo)$ defines a solvable Lie algebra
$$
\sg=\RR f\oplus\ngo,
$$
given as the semi-direct sum such that $\ad{f}|_{\ngo}=D$.  If $\ip$ is an inner product on $\sg$ such that $|f|=1$ and $f\perp\ngo$, then it is easy to see using e.g. \cite[(11)]{alek} that the Ricci operator of $(\sg,\ip)$ is given by
\begin{equation}\label{Ric-half}
\Ricci=\left[\begin{array}{c|c}
-\tr{S(D)^2} & \ast \\\hline
& \\
\ast & \Ricci_\ngo+\unm[D, D^t]-\tr(D)S(D) \\ &
\end{array}\right],
\end{equation}
where $\Ricci_\ngo=\tfrac{|\lb|^2}{4}m(\lb)$ is the Ricci operator of $(\ngo,\ip)$,  $S(D):=\unm(D+D^t)$ and
$$
\la\Ricci f,X\ra=-\tr{S(D)\ad_\ngo{X}}, \qquad\forall X\in\ngo.
$$
It is easy to see that $\ast=0$ if $D$ is normal (see e.g.\ the proof \cite[Proposition 4.3]{solvsolitons}).

\begin{definition}
A derivation $D$ of a nilpotent Lie algebra $\ngo$ with $\tr{D}>0$ is said to be {\it Ricci negative} if the solvable Lie algebra $\sg=\RR f\oplus\ngo$ defined above admits an inner product of negative Ricci curvature such that $D^t=D$ and $f\perp\ngo$.
\end{definition}

We ask for the positivity of the trace in the above definition since the isomorphism class of $\sg=\RR f\oplus\ngo$ is invariant up to a nonzero scaling of $D$.  Furthermore, unimodular solvable Lie algebras do not admit Ricci negative metrics (see \cite{Dtt}), so $\tr{D}\ne 0$ is a necessary condition.  Note that any Ricci negative derivation is diagonalizable.  It easily follows from \eqref{Ric-half} that if $D>0$ (i.e.\ all its eigenvalues are positive) then $D$ is Ricci negative.  On the other hand, the only known necessary condition for a derivation $D$ to be Ricci negative is that $D$ must be positive when restricted to the center of $\ngo$ (see \cite[Theorem 2, (1)]{NklNkn}).  We consider in Section \ref{RNnil} the problem of which nilpotent Lie algebras admit a Ricci negative derivation.

The following natural question arises:

\begin{quote}
(Q1) Given a nilpotent Lie algebra $\ngo$ with a basis $\{ e_i\}$, what kind of set is the cone
$$
\left\{ D\in\Der(\ngo):D \;\mbox{is diagonal relative to}\; \{ e_i\}\; \mbox{and Ricci negative}\right\}?
$$
Is it open in the space of diagonal derivations?  Is it convex?
\end{quote}

In \cite{NklNkn,Nkl}, it was proved that this cone is open and convex for Heisenberg and filiform Lie algebras endowed with the standard bases.

\subsection{Ricci negative derivations in terms of the moment map}
Let $G_D$ denote the connected component of the identity of the centralizer subgroup of $D$ in $\G$ and let $\ggo_D$ be its Lie algebra.  Given an inner product $\ip$ on a vector space $\ngo$, we denote by $\sym(\ngo)$ the space of symmetric operators of $\ngo$ and by $\sym(\ngo)_{>0}$ the open cone of positive definite ones.  If $m$ is the moment map defined as in \eqref{defmm} by $\ip$, then the moment map $m_D$ for the $G_D$-action satisfies that $m_D(\mu)$ is the orthogonal projection of $m(\mu)$ on $\sym(\ngo)\cap\ggo_D$.  Since $m(\mu)$ commutes with any symmetric derivation of $(\ngo,\mu)$ and $D$ is a derivation of any Lie bracket in the set $\overline{G_D\cdot\lb}$, we obtain that
\begin{equation}\label{mDm}
m_D=m \quad \mbox{on} \quad \overline{G_D\cdot\lb} \quad \mbox{if} \quad D\in\sym(\ngo).
\end{equation}

\begin{theorem}\label{main2}
Let $\ngo$ be a nilpotent Lie algebra endowed with an inner product and consider $D\in\Der(\ngo)\cap\sym(\ngo)$ with $\tr{D}>0$.  Then the following conditions are equivalent:

\begin{itemize}
\item[(i)] $D$ is Ricci negative.
\item[ ]
\item[(ii)] $D\in\RR_{>0}\, m\left(G_D\cdot\lb\right) + \sym(\ngo)_{>0}$.
\item[ ]
\item[(iii)] $D\in\RR_{>0}\, m\left(\overline{G_D\cdot\lb}\right) + \sym(\ngo)_{>0}$.
\end{itemize}
\end{theorem}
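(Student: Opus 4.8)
The plan is to show the chain of implications (i) $\Rightarrow$ (ii) $\Rightarrow$ (iii) $\Rightarrow$ (i), using formula \eqref{Ric-half} together with the identification \eqref{mmR} of the moment map with the Ricci operator and the reduction \eqref{mDm} to the centralizer group $G_D$.

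\emph{Step 1: (i) $\Rightarrow$ (ii).} Suppose $D$ is Ricci negative, so there is an inner product on $\sg=\RR f\oplus\ngo$ with $D^t=D$, $f\perp\ngo$, and $\Ricci<0$. Since $D$ is symmetric for this inner product, the off-diagonal blocks in \eqref{Ric-half} vanish and the condition $\Ricci<0$ is equivalent to the single block condition $\Ricci_\ngo - \tr(D)\, D < 0$ on $\ngo$ (the $(f,f)$-entry $-\tr{S(D)^2} = -\tr{D^2}$ is automatically negative once $D\neq 0$, which holds as $\tr D>0$). Now I would pass to the moment-map picture: replacing the given inner product on $\ngo$ by an $\ip$-orthonormal basis amounts to moving $\lb$ to some $\mu\in\G\cdot\lb$, and since $D$ remains a symmetric derivation of $\mu$ we may assume (after also conjugating $D$) that $\mu$ lies in the orbit $G_D\cdot\lb$ of the original bracket, with $D\in\sym(\ngo)\cap\ggo_D$. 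By \eqref{mmR}, $\Ricci_\mu = \tfrac{|\mu|^2}{4} m(\mu)$, so the block inequality becomes
\[
\tfrac{|\mu|^2}{4} m(\mu) - \tr(D)\, D < 0,
\]
i.e. $D \in \tfrac{|\mu|^2}{4\tr(D)}\, m(\mu) + \sym(\ngo)_{>0}$, which is exactly membership in $\RR_{>0}\, m(G_D\cdot\lb) + \sym(\ngo)_{>0}$.

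\emph{Step 2: (ii) $\Rightarrow$ (iii).} This is immediate since $G_D\cdot\lb \subset \overline{G_D\cdot\lb}$, hence $m(G_D\cdot\lb)\subset m(\overline{G_D\cdot\lb})$.

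\emph{Step 3: (iii) $\Rightarrow$ (i).} This is the direction requiring the real GIT input, and I expect it to be the main obstacle. Write the hypothesis as $c\, D_0 + P = D$ where $D_0 \in m(\overline{G_D\cdot\lb})$, $c>0$, and $P\in\sym(\ngo)_{>0}$. If $D_0 = m(\mu)$ with $\mu \in G_D\cdot\lb$ itself, we simply run Step 1 backwards: the orthonormal basis realizing $\mu$ gives an inner product on $\ngo$ with $\Ricci_\ngo - \tr(D) D = \tfrac{|\mu|^2}{4}m(\mu) - \tr(D)D$, which one can arrange to be negative definite by rescaling $D$ (the isomorphism type of $\sg$ is scale-invariant in $D$), so \eqref{Ric-half} gives $\Ricci<0$ on $\sg$. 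The difficulty is when $D_0$ lies only in the \emph{closure} $m(\overline{G_D\cdot\lb})$ but not in $m(G_D\cdot\lb)$: then there is no single bracket isomorphic to $\lb$ whose Ricci operator realizes $D_0$. Here I would invoke the convexity theorems of Section~\ref{git}: by \eqref{mDm}, $m = m_D$ on $\overline{G_D\cdot\lb}$, and the image $m_D(\overline{G_D\cdot\lb})$ has the convex-polytope structure of Theorem~\ref{conv1}(ii)--(iii) (applied to the compatible subgroup $G_D$ and its maximal torus), so that $D_0$ is a limit of points $m(\mu_t)$ with $\mu_t\in G_D\cdot\lb$. Since the condition ``$\tfrac{|\mu|^2}{4}m(\mu) - \tr(D)D + \varepsilon I < 0$'' is open in $m(\mu)$ and we have the strict slack coming from $P\in\sym(\ngo)_{>0}$, for $t$ close enough to the limit the bracket $\mu_t$ already satisfies $\tfrac{|\mu_t|^2}{4}m(\mu_t) - \tr(D)D < 0$ after a suitable rescaling of $D$; this produces the desired inner product on $\sg$ with $\Ricci<0$. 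The key technical point to get right is that the rescaling of $D$ and the slack $P$ can be balanced uniformly, which is where the strict positivity of $P$ (rather than mere positive semidefiniteness) is essential, and why the cone in (iii) is taken to be an open sum.
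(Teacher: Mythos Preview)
Your outline has the right shape, but Step~1 contains a genuine gap. You write that after changing from the Ricci negative inner product $\ip_1$ to the fixed $\ip$ the bracket moves to some $\mu=h\cdot\lb\in\G\cdot\lb$, and then say ``since $D$ remains a symmetric derivation of $\mu$ we may assume (after also conjugating $D$) that $\mu$ lies in $G_D\cdot\lb$''. This is where the argument breaks: under the isometry, $D$ is carried to $hDh^{-1}$, not to $D$ itself, so a priori it is $hDh^{-1}$ (not $D$) that is a symmetric derivation of $\mu$. If you then ``conjugate $D$'' you are proving (ii) for $hDh^{-1}$, not for $D$, and the statement of the theorem is about the fixed $D$. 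The missing observation is that $D$ is symmetric for \emph{both} $\ip$ and $\ip_1$: writing $\ip_1=\la h\cdot,h\cdot\ra$ with $h\in\sym(\ngo)_{>0}$, the two symmetry conditions give $h^2D=Dh^2$, hence $hD=Dh$, so $h\in G_D$ automatically. This is exactly what forces $\mu\in G_D\cdot\lb$ without touching $D$, and it is the crux of (i)$\Rightarrow$(ii).

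Step~3 is essentially correct but you have inflated its difficulty. You describe it as ``the direction requiring the real GIT input'' and invoke Theorem~\ref{conv1}, but no convexity is needed: if $D=r\,m(\mu)+E$ with $\mu\in\overline{G_D\cdot\lb}$, just take any sequence $h_k\in G_D$ with $h_k\cdot\lb\to\mu$. Since $D\in\Der(h_k\cdot\lb)$ for every $k$ (here is where $h_k\in G_D$ matters), one rescales $\mu$ so that $\Ricci_\mu-\tr(D)D=-\tr(D)E<0$, and then by continuity of the Ricci operator the same inequality holds for $h_k\cdot\lb$ once $k$ is large; pulling back by $h_k$ gives the Ricci negative inner product on the original $\sg$. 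The paper does exactly this, with no appeal to the polytope structure. (Your device of rescaling $D$ rather than $\mu$ also works, since $G_D=G_{cD}$ and the cone in (ii) is scale-invariant, but scaling $\mu$ is cleaner.)
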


\begin{proof}
Let $\ip$ denote the inner product endowing $\ngo$, which we extend to $\sg$ by setting $f\perp\ngo$ and $|f|=1$.  We first assume part (i) and denote by $\ip_1$ the Ricci negative inner product on $\sg$ such that $D^t=D$ and $f\perp\ngo$, which up to scaling can be assumed to satisfy $|f|_1=1$.  There exists $h\in\sym(\ngo)_{>0}$ such that $\ip_1|_{\ngo\times\ngo}=\la h\cdot,h\cdot\ra$, giving rise to an isometry
\begin{equation}\label{isome}
(\sg=\RR f\oplus\ngo,\ip_1) \longrightarrow (\sg_1=\RR f\oplus\ngo,\ip),
\end{equation}
where the Lie bracket of $\sg_1$ is defined by $\ad_1{f}|_{\ngo}=hDh^{-1}$, $\lb_1|_{\ngo\times\ngo}=h\cdot\lb$.  The isometry is produced by the orthogonal isomorphism sending $f$ to $f$ and each $X\in\ngo$ to $h(X)$.  Since $D$ is also symmetric with respect to $\ip_1$ we obtain that $h\in G_D$, and so the Ricci operator of $(\sg_1,\ip)$, which is also negative definite by \eqref{isome}, is given by
$$
\Ricci_1|_{\ngo} = r\, m(h\cdot\lb) - \tr(D)D,
$$
for some $r>0$ (see \eqref{Ric-half} and \eqref{mmR}), from which part (ii) follows .

Since (iii) follows trivially from (ii), it would only remains to show that part (iii) implies (i).  Assume that $D=r\, m(\mu)+E$ for some $r>0$, $E\in\sym(\ngo)_{>0}$ and that there exist $h_k\in G_D$ such that $h_k\cdot\lb$ converges to $\mu$, as $k\to\infty$.  Thus $D\in\Der(h_k\cdot\lb)$ for any $k$, $D\in\Der(\mu)$ and by scaling $\mu$ appropriately we obtain that $\Ricci_\mu-\tr(D)D$ is negative definite (see \eqref{mmR}).  This implies that $(\sg_1,\ip)$ has negative Ricci curvature if we define the Lie bracket on $\sg_1$ using $D$ and $\mu$ as above, and consequently, $(\sg_2,\ip)$ with Lie bracket defined by $D$ and $h_k\cdot\lb$ is also negatively Ricci curved for a sufficiently large $k$ by continuity.  By applying the isometry \eqref{isome}, one shows that $\la h_k\cdot,h_k\cdot\ra$ produces a Ricci negative metric on $\sg$, concluding the proof.
\end{proof}

\begin{remark}
In much the same way as in the above proof, one obtains that the solvable Lie algebra $\sg=\RR f\oplus\ngo$ admits an Einstein (non-flat) inner product such that $D^t=D$ and $f\perp\ngo$ if and only if $D\in\RR_{>0}\, m\left(G_D\cdot\lb\right) + \RR_{>0}I$.
\end{remark}

Recall that a linear operator of $\ngo$ is diagonalizable over $\RR$ if and only if it is symmetric with respect to some inner product on $\ngo$.  If instead of an inner product we fix a basis of the Lie algebra $\ngo$, then the above proposition can be rewritten as follows for diagonal derivations.

\begin{corollary}\label{main}
Let $\ngo$ be a nilpotent Lie algebra  endowed with a basis and consider  $D\in\Der(\ngo)\cap\tg^n$ with $\tr{D}>0$.  Then the following conditions are equivalent:

\begin{itemize}
\item[(i)] $D$ is Ricci negative.
\item[ ]
\item[(ii)] $D\in\RR_{>0}\, m\left(G_D\cdot\lb\right)\cap\tg^n + \tg^n_{>0}$.
\item[ ]
\item[(iii)] $D\in\RR_{>0}\, m\left(\overline{G_D\cdot\lb}\right)\cap\tg^n + \tg^n_{>0}$.
\item[ ]
\item[(iv)] $D\in\RR_{>0}\, m\left(\overline{G_D\cdot\lb}\right)\cap\ag_+^D + \tg^n_{>0}$, where $\ag_+^D\subset\tg^n$ is any Weyl chamber of $G_D$.
\end{itemize}
\end{corollary}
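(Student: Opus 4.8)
The plan is to deduce the corollary from Theorem \ref{main2} by endowing $\ngo$ with the inner product that makes the given basis orthonormal. With this choice one has $\sym(\ngo)=\sym(n)$ and $\sym(\ngo)_{>0}=\sym(n)_{>0}$, the condition $D\in\Der(\ngo)\cap\tg^n$ (with $\tr D>0$) is subsumed by the hypothesis of Theorem \ref{main2}, and that theorem already identifies (i) with $D\in\RR_{>0}\,m(G_D\cdot\lb)+\sym(n)_{>0}$ as well as with the same statement for $\overline{G_D\cdot\lb}$. So what remains to prove are two ``linear algebra'' reductions: \textbf{(A)} when $D$ is diagonal one may replace $m(G_D\cdot\lb)$ (resp.\ $m(\overline{G_D\cdot\lb})$) by its intersection with $\tg^n$ and $\sym(n)_{>0}$ by $\tg^n_{>0}$ --- this yields (ii) $\Leftrightarrow$ Theorem \ref{main2}(ii) and (iii) $\Leftrightarrow$ Theorem \ref{main2}(iii); and \textbf{(B)} one may further intersect with any Weyl chamber $\ag_+^D$ of $G_D$ --- this yields (iii) $\Leftrightarrow$ (iv).

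For \textbf{(A)} I would first note that, $D$ being diagonal, the whole diagonal torus commutes with $D$, so $\tg^n\subseteq\pg_D:=\ggo_D\cap\sym(n)$; being maximal abelian already in $\sym(n)$, the subspace $\tg^n$ is a Cartan subspace of the compatible (self-adjoint, reductive) group $G_D$ and may be used as the ``$\ag$'' of Section \ref{git}. Put $K_D:=G_D\cap\Or(n)$. For $\mu\in\overline{G_D\cdot\lb}$ the derivation $D$ is symmetric for $(\ngo,\mu)$, so $m(\mu)$ commutes with $D$ and, being symmetric, lies in $\pg_D$ --- this is exactly \eqref{mDm}. Using $\pg_D=\Ad(K_D)\,\tg^n$ I pick $k\in K_D$ with $m(k\cdot\mu)=\Ad(k)\,m(\mu)\in\tg^n$ ($\Or(n)$-equivariance of $m$); since $k$ commutes with $D$, applying $\Ad(k)$ to a relation $D=r\,m(\mu)+E$ (with $r>0$, $E\in\sym(n)_{>0}$) produces $D=r\,m(k\cdot\mu)+\Ad(k)E$ with $\Ad(k)E\in\sym(n)_{>0}$, and $k\cdot\mu$ still lies in $G_D\cdot\lb$ (resp.\ its closure). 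Hence I may assume $m(\mu)\in\tg^n$ from the outset, and then $E=D-r\,m(\mu)\in\tg^n\cap\sym(n)_{>0}=\tg^n_{>0}$. The reverse inclusions are immediate since $m(G_D\cdot\lb)\cap\tg^n\subseteq m(G_D\cdot\lb)$ and $\tg^n_{>0}\subseteq\sym(n)_{>0}$; the argument for $\overline{G_D\cdot\lb}$ is identical.

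For \textbf{(B)} I would use that the closed Weyl chambers $w\cdot\ag_+^D$, with $w$ ranging over the Weyl group $W_D:=N_{K_D}(\tg^n)/C_{K_D}(\tg^n)$, cover $\tg^n$: since $m$ is $K_D$-equivariant and $\overline{G_D\cdot\lb}$ is $K_D$-invariant, $m(\overline{G_D\cdot\lb})\cap\tg^n=\bigcup_{w\in W_D}w\cdot\big(m(\overline{G_D\cdot\lb})\cap\ag_+^D\big)$. As $W_D$ acts on $\tg^n=\{\Diag(a_1,\dots,a_n)\}$ by permuting the entries $a_i$, the cone $\tg^n_{>0}$ is $W_D$-stable, so the set appearing in (iii) is precisely the $W_D$-orbit of the one in (iv). The crucial remark is then that every $w\in W_D$ fixes $D$: a representative $k$ of $w$ lies in $K_D\subseteq G_D$, hence commutes with $D$, so $w\cdot D=\Ad(k)\,D=D$. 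Consequently $D$ lies in the $W_D$-orbit of the set in (iv) if and only if it lies in that set itself, and this holds for \emph{any} choice of chamber $\ag_+^D$, proving (iii) $\Leftrightarrow$ (iv).

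The steps that are not purely formal, and where I expect to spend care, are: checking that $G_D$ is a compatible reductive subgroup so that the Cartan decomposition and the identity $\pg_D=\Ad(K_D)\,\tg^n$ are legitimately available (this holds because $D^t=D$ forces $\ggo_D=\{X:[X,D]=0\}$ to be transpose-stable); verifying that $m(\overline{G_D\cdot\lb})\subseteq\pg_D$, which is continuity together with the closedness of the condition ``$D\in\Der(\mu)$''; and the observation in \textbf{(B)} that $D$ is a fixed point of $W_D$. I expect this last point --- exactly what lets (iv) be asserted for every Weyl chamber rather than a distinguished one --- to be the only genuinely conceptual step, everything else being bookkeeping with the equivariance of the moment map. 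One may finally note that combining (iv) with Theorem \ref{conv1}(ii) applied to $G_D$ (which makes $m(\overline{G_D\cdot\lb})\cap\ag_+^D$ a convex polytope) shows that the cone on the right-hand side of (iv) is open and convex.
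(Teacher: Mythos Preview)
Your proposal is correct and follows essentially the same approach as the paper: both arguments reduce to Theorem \ref{main2} via the inner product making the basis orthonormal, then use conjugation by an element of $K_D=G_D\cap\Or(n)$ to bring $m(\mu)$ into $\tg^n$ (your step \textbf{(A)}, the paper's observation that $[\Ricci_1|_\ngo,D]=0$ allows such a conjugation), and finally pass to a chosen Weyl chamber by conjugating with a representative $h\in K_D$ of a Weyl group element, using that $h$ fixes $D$ (your step \textbf{(B)}, the paper's direct computation $D=h^{-1}Dh=r\,m(h^{-1}g\cdot\lb)+h^{-1}Eh$). Your phrasing in terms of the Weyl group $W_D$ fixing $D$ and permuting diagonal entries makes the mechanism slightly more transparent, but the underlying argument is identical.
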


\begin{remark}
The cones in parts (ii)-(iv) are all open in $\tg^n$ as $\tg^n_{>0}$ is so.  Moreover, the subset in part (iv) is indeed an open and convex cone since  $m\left(\overline{G_D\cdot\lb}\right)\cap\ag^{D}_+$ is a convex polytope by Theorem \ref{conv1}, (ii).  Note that actually a Ricci negative $D$ must belong to the intersection of all the convex cones obtained by running over all the Weyl chambers.  This provides a very useful insight to work on question (Q1).
\end{remark}

\begin{proof}
If $D$ is diagonal when written in terms of the basis, then we consider the inner product on $\ngo$ making this basis orthonormal and the equivalence between (i), (ii) and (iii) follows in much the same way as the above theorem.  The only observation to make is that at the end of the proof of the fact that (i) implies (ii), since $[\Ricci_1|_\ngo,D]=0$, there exists $g\in\Or(n)\cap G_D$ such that $g\Ricci_1|_\ngo g^{-1}\in\tg^n$, and thus $g\Ricci_1|_\ngo g^{-1}= r\, m(gh\cdot\lb) - \tr(D)D$, from which part (ii) follows.

Finally, assume that part (ii) holds, say $D=rM+E$, where $r>0$, $M=m(g\cdot\lb)\in\tg^n$ for some $g\in G_D$ and $E\in\tg^n_{>0}$.  Thus $M$ belongs to some Weyl chamber $\ag_2$ of $G_D$, which has to be of the form $\ag_2=h\ag_+^Dh^{-1}$ for some $h\in\Or(n)\cap G_D$.  We therefore obtain that
$$
D = h^{-1}Dh = rh^{-1}Mh+h^{-1}Eh = rm(h^{-1}g\cdot\lb)+h^{-1}Eh,
$$
with $m(h^{-1}g\cdot\lb)\in m\left(G_D\cdot\lb\right)\cap\ag_+^D$ and $h^{-1}Eh\in\tg^n_{>0}$, from which part (iv) follows, concluding the proof.
\end{proof}

Given a nilpotent Lie algebra $\ngo$ endowed with a basis $\{ e_i\}$, we introduce the following notation:
$$
\dg:=\Der(\ngo)\cap\tg^n, \qquad \dg_{RN}:=\{ D\in\dg:D\;\mbox{is Ricci negative}\},
$$
and $T\subset\Gl_n(\RR)$ will denote the (connected) torus with Lie algebra $\tg^n$ (i.e.\ the subgroup of diagonal matrices with positive entries).

\begin{example}\label{heis3}
Let $\ngo$ be the $3$-dimensional Heisenberg Lie algebra with basis $\{ e_1,e_2,e_3\}$ and Lie bracket $[e_1,e_2]=e_3$.  We have that
$$
\dg=\{D:=\Diag(d_1,d_2,d_1+d_2):d_1,d_2\in\RR\},
$$
and if $D$ is generic (i.e.\ $d_1, d_2$ are different nonzero real numbers), then $G_D=T$, $\overline{T\cdot\lb}=\RR_{\leq 0}\lb$ and $m(\mu)=F_{12}^3$ for any $\mu=x\lb$.  Therefore, according to Corollary \ref{main}, a generic $D\in\dg$ with $\tr{D}>0$ belongs to $\dg_{RN}$ if and only if there exists $a\geq 0$ such that
$$
d_1+a>0, \quad d_2+a>0, \quad d_1+d_2-a>0,
$$
or equivalently, $d_1+d_2>a>-d_1,-d_2$.  By using $d_1+d_2>0$ we obtain that this is in turn equivalent to $2d_1+d_2>0$ and $d_1+2d_2>0$.  This implies that
$$
\dg_{RN}=\{D\in\dg:2d_1+d_2>0, \; d_1+2d_2>0\},
$$
an open and convex cone.  Indeed, since any non-generic derivation $D_0$ with positive trace belongs to the cone on the right and $T\subset G_{D_0}$, so $D_0\in\dg_{RN}$ also by Corollary \ref{main}.  \end{example}

\begin{example}\label{n4nice}
Let $\ngo$ be the $4$-dimensional nilpotent Lie algebra with basis $\{ e_1,\dots,e_4\}$ and Lie bracket
$$
[e_1,e_2]=e_3, \quad [e_1,e_3]=e_4.
$$
Since
$$
\dg=\{D:=\Diag(d_1,d_2,d_1+d_2,2d_1+d_2):d_1,d_2\in\RR\},
$$
we obtain that $D$ is generic if and only if $d_1, d_2\ne 0$ and $d_1\pm d_2\ne 0$.  In that case, $G_D=T$ and $\overline{T\cdot\lb}$ is given by the linear subspace of $V$ of nilpotent Lie brackets $\mu=\mu(x,y)$ defined by
$$
\mu(e_1,e_2)=xe_3, \quad \mu(e_1,e_3)=ye_4,  \qquad x,y\geq 0,
$$
and the moment map is given by
$$
m(\mu) = \frac{2}{|\mu|^2}\left[\begin{array}{cccc}
-(x^2+y^2)&&&\\
&-x^2&&\\
&&x^2-y^2&\\
&&&y^2
\end{array}\right] = \frac{1}{x^2+y^2}\left(x^2F_{12}^3+y^2F_{13}^4\right),
$$
which implies that $m\left(\overline{T\cdot\lb}\right)\cap\tg^4=m\left(\overline{T\cdot\lb}\right)=\CH(F_{12}^3, F_{13}^4)$.  Let us now show that
$$
\dg_{RN}=\{D\in\dg:d_1+d_2>0, \; 2d_1+d_2>0\},
$$
which is open and convex as in the above example.  It follows from Corollary \ref{main} that a generic $D\in\dg$ belongs to $\dg_{RN}$ if and only if there exist $a,b\geq 0$ such that
\begin{equation}\label{n4nice-eq}
d_1+a+b>0, \quad d_2+a>0, \quad d_1+d_2-a+b>0, \quad 2d_1+d_2-b>0.
\end{equation}
The last inequality implies that $2d_1+d_2>0$ and the condition $d_1+d_2>0$ follows by adding the last three inequalities.  To prove that these two conditions are sufficient we proceed as follows.  Note that \eqref{n4nice-eq} is equivalent to the existence of $b\geq 0$ such that
$$
2d_1+d_2>b>-d_1-a,-d_1-d_2+a,
$$
which holds if and only if there is an $a\geq 0$ such that
$$
3d_1+2d_2>a>-3d_1-d_2,-d_2,
$$
that is,  $2d_1+d_2>0$ and $d_1+d_2>0$ since $3d_1+2d_2>0$.  On the other hand, the only non-generic derivation with positive trace is $D_0=(1,-1,0,1)$ (up to scaling), and it easy to check that
$$
m(G_{D_0}\cdot\lb)\cap\tg^4 = \CH^\circ(F_{12}^3, F_{13}^4)\cup \CH^\circ(F_{24}^3, F_{34}^1).
$$
This also follows from \eqref{unionW}.  Thus $D_0$ is not Ricci negative by Corollary \ref{main}; indeed, $D_0=aF_{24}^3+bF_{35}^1+E$, $a,b\geq 0$, $E\in\tg^n_{>0}$, implies that $a>1>b$ and $b>a$, a contradiction.
\end{example}

The following corollary of Theorem \ref{main2} follows from Theorem \ref{conv1}, (iii) and provides a necessary condition for a symmetric derivation to be Ricci negative.  We denote by $\Diagg(A)$ the diagonal part of a matrix $A$.

\begin{corollary}\label{cor-main2}
Let $\ngo$ be a nilpotent Lie algebra endowed with an inner product.  If $D$ is a symmetric derivation of $\ngo$ which is Ricci negative, then relative to any orthonormal basis of $\ngo$, the diagonal part of $D$ belongs to the cone
$$
\RR_{>0}\, \Diagg\left(m\left(\overline{G_D\cdot\lb}\right)\right) + \tg^n_{>0}.
$$
\end{corollary}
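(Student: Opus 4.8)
The plan is to deduce the statement by projecting the equivalence in Theorem~\ref{main2}(iii) onto the space $\tg^n$ of diagonal matrices. First I would fix an orthonormal basis $\{e_1,\dots,e_n\}$ of $(\ngo,\ip)$ and record that, under the induced inner product on $\sym(\ngo)$ (for which $\{E_{ij}\}$ is orthonormal), the subspace $\tg^n=\spann\{E_{ii}\}$ is orthogonal to $\spann\{E_{ij}+E_{ji}:i<j\}$; hence the operation $A\mapsto\Diagg(A)$ is precisely the orthogonal projection $\sym(\ngo)\to\tg^n$, and in particular it is linear.

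Next I would invoke Theorem~\ref{main2}(iii): since $D\in\Der(\ngo)\cap\sym(\ngo)$ has $\tr D>0$ and is Ricci negative, there exist $r>0$, a Lie bracket $\mu\in\overline{G_D\cdot\lb}$ and $P\in\sym(\ngo)_{>0}$ with $D=r\,m(\mu)+P$. Applying $\Diagg$ and using its linearity yields $\Diagg(D)=r\,\Diagg(m(\mu))+\Diagg(P)$. Here $\Diagg(m(\mu))$ visibly lies in $\Diagg(m(\overline{G_D\cdot\lb}))$, we have $r>0$, and $\Diagg(P)\in\tg^n_{>0}$ because the $i$-th diagonal entry of the positive definite symmetric operator $P$ relative to an orthonormal basis equals $\la Pe_i,e_i\ra>0$. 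This already gives $\Diagg(D)\in\RR_{>0}\,\Diagg(m(\overline{G_D\cdot\lb}))+\tg^n_{>0}$, which is the asserted cone.

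The one point that genuinely uses Theorem~\ref{conv1}(iii) — and which I regard as the only content beyond the formal projection above, hence the \emph{main obstacle}, though a mild one — is the assertion that $\Diagg(m(\overline{G_D\cdot\lb}))$ is a convex polytope, so that the necessary condition is effectively checkable. To see this I would, since $D$ is symmetric, take the orthonormal basis to diagonalize $D$; then $\tg^n\subseteq\ggo_D$, the subalgebra $\ag:=\tg^n$ is maximal abelian in $\pg:=\ggo_D\cap\sym(\ngo)$, and on $\overline{G_D\cdot\lb}$ the map $\Diagg\circ m$ coincides with the $A$-moment map $m_\ag$ for $A:=\exp\tg^n$ (using that the $\Gl_n(\RR)$-moment map agrees with the $G_D$-moment map on $\overline{G_D\cdot\lb}$, as in \eqref{mDm}). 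Since $G_D$ is connected the projectivization of $\overline{G_D\cdot\lb}$ is irreducible, so Theorem~\ref{conv1}(iii) applies and $m_\ag(\overline{G_D\cdot\lb})=\Diagg(m(\overline{G_D\cdot\lb}))$ is a convex polytope. The remaining care is purely bookkeeping: keeping straight which moment map is in force and that taking the diagonal part is an orthogonal projection, so I do not expect a substantive difficulty.
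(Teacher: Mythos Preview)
Your proof is correct and follows exactly the approach the paper intends: the paper merely says the corollary ``follows from Theorem~\ref{main2}'' (with Theorem~\ref{conv1}(iii) mentioned for the convexity remark afterward), and you have simply fleshed out the projection argument, using that $\Diagg$ is linear and sends $\sym(\ngo)_{>0}$ into $\tg^n_{>0}$.

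One small clarification: the convexity of $\Diagg\big(m(\overline{G_D\cdot\lb})\big)$ is not part of the corollary statement itself but rather the remark following it, so your steps (1)--(5) already constitute a complete proof of the corollary for \emph{any} orthonormal basis; there is no obstacle at all. Your observation that Theorem~\ref{conv1}(iii) requires $T\subset G_D$, hence a basis diagonalizing $D$, is in fact a sharpening of what the paper says --- the paper's sentence ``Recall from Theorem~\ref{conv1}, (iii) that this cone is open and convex'' is stated a bit loosely, and your care in choosing the diagonalizing basis is the right way to make it precise.
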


Recall from Theorem \ref{conv1}, (iii) that this cone is open and convex.

\subsection{Using the convexity properties of the moment map}
The characterizations of Ricci negative derivations obtained in Corollary \ref{main} lead us to apply the results from real GIT described in Section \ref{git} to try to understand the set
$$
m\left(\overline{G_D\cdot\lb}\right)\cap\tg^n.
$$
This coincides with $m(X)\cap\ag$ in the case when $G=G_D$, $\ag=\tg^n$ and $X=\PP\left(\overline{G_D\cdot\lb}\right)$.  Recall from \eqref{mDm} that the moment maps for $G_D$ and $\Gl_n(\RR)$ coincide on $X$ in this situation.  However, the Weyl chambers for $G_D$ can be much larger.  It follows from Theorem \ref{conv1}, (ii) that $m\left(\overline{G_D\cdot\lb}\right)\cap\ag^{D}_+$ is a convex polytope for any Weyl chamber $\ag^{D}_+\subset\tg^n$ of $G_D$ and hence, as in \eqref{unionW}, one obtains that $m\left(\overline{G_D\cdot\lb}\right)\cap\tg^n$ is the union of finitely many convex polytopes by running over all Weyl chambers for $G_D$.  In particular, it is convex if $G_D=T$.

In view of the the fact that the torus $T\subset\Gl_n(\RR)$ is always contained in $G_D$ for any $D\in\dg$, we need to deepen the study of the subset
$$
m\left(\overline{T\cdot\lb}\right)\cap\tg^n.
$$
So in what follows, according to the notation in Section \ref{git}, we consider $G=\Gl_n(\RR)$, thus $\ag=\tg^n$ and $A=T$.  Recall that $m_\ag=\Diagg\circ\, m$.

For each $\mu\in V$, we define the following convex subsets of $\tg^n$,
$$
\CH_\mu := \CH\left(F_{ij}^k:c(\mu)_{ij}^k\ne 0\right), \quad \CH_\mu^\circ := \CH^\circ\left(F_{ij}^k:c(\mu)_{ij}^k\ne 0\right),
$$
where $c(\mu)_{ij}^k$ are the structural constants of $\mu$ (see Section \ref{V-sec}).  Note that $F_{ij}^k=m(\mu_{ijk})$.

\begin{lemma}\label{mtoro}
$\Diagg\left(m\left(\overline{T\cdot\mu}\right)\right) = \CH_\mu$.
\end{lemma}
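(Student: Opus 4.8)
The plan is to prove the two inclusions separately: "$\subseteq$" by a direct computation of $m_\ag=\Diagg\circ m$ on weight vectors, and "$\supseteq$" by feeding that computation into the fact, supplied by Theorem~\ref{conv1}, that $m_\ag(\overline{T\cdot\mu})$ is a convex polytope.

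First I would record the moment map on an arbitrary nonzero $\nu=\sum_{i<j,k}c(\nu)_{ij}^k\,\mu_{ijk}\in V$: since the $\mu_{ijk}$ are orthonormal weight vectors of weight $F_{ij}^k$, formula \eqref{defmm} gives at once
$$
m_\ag(\nu)=\Diagg\big(m(\nu)\big)=\frac{1}{|\nu|^2}\sum_{i<j,k}\big(c(\nu)_{ij}^k\big)^2\,F_{ij}^k\ \in\ \CH\big(F_{ij}^k:c(\nu)_{ij}^k\neq0\big).
$$
Because $t\cdot\mu_{ijk}$ is a positive multiple of $\mu_{ijk}$ for every $t\in T$, the orbit $T\cdot\mu$, hence its closure, lies in the coordinate subspace spanned by $\{\mu_{ijk}:c(\mu)_{ij}^k\neq0\}$; so every $\nu\in\overline{T\cdot\mu}$ satisfies $\{(i,j,k):c(\nu)_{ij}^k\neq0\}\subseteq\{(i,j,k):c(\mu)_{ij}^k\neq0\}$, and the display yields $m_\ag(\nu)\in\CH_\mu$. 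This proves $\Diagg(m(\overline{T\cdot\mu}))\subseteq\CH_\mu$ (and in fact $m_\ag(T\cdot\mu)\subseteq\CH_\mu^\circ$).

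For the reverse inclusion I would set $P:=\Diagg(m(\overline{T\cdot\mu}))=m_\ag\big(\PP(\overline{T\cdot\mu})\big)$ (using that $m$ is scale invariant). By Theorem~\ref{conv1} (applied with $A=T$), $P$ is a convex polytope, and by the previous paragraph $P\subseteq\CH_\mu$; hence it suffices to show that $P$ contains every vertex of the polytope $\CH_\mu=\CH(F_{ij}^k:c(\mu)_{ij}^k\neq0)$. Such a vertex equals $v:=F_{i_0j_0}^{k_0}$ for some triple with $c(\mu)_{i_0j_0}^{k_0}\neq0$, and, being a vertex, it is exposed: there is $H\in\tg^n$ with $\la v,H\ra>\la F_{ij}^k,H\ra$ whenever $c(\mu)_{ij}^k\neq0$ and $F_{ij}^k\neq v$. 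Putting $M:=\la v,H\ra=\max\{\la F_{ij}^k,H\ra:c(\mu)_{ij}^k\neq0\}$ and using $\exp(sH)\cdot\mu=\sum_{i<j,k}c(\mu)_{ij}^k\,e^{s\la F_{ij}^k,H\ra}\,\mu_{ijk}$ with $\exp(sH)\in T$, one finds
$$
e^{-sM}\,\exp(sH)\cdot\mu\ \longrightarrow\ \nu:=\sum_{F_{ij}^k=v}c(\mu)_{ij}^k\,\mu_{ijk}\qquad(s\to+\infty),
$$
so $[\nu]\in\PP(\overline{T\cdot\mu})$; and since every weight vector appearing in $\nu$ has weight $v$, the first display gives $m_\ag(\nu)=v$, whence $v\in P$. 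Therefore $P\supseteq\CH_\mu$, and together with $P\subseteq\CH_\mu$ this proves the lemma.

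The hard part is the reverse inclusion, and within it the only non-formal ingredient is the convexity of $P$ coming from Theorem~\ref{conv1}: once $P$ is known to be convex, the problem collapses to the finitely many vertices of $\CH_\mu$, each of which is realized by the routine one-parameter-subgroup degeneration above. (Alternatively one could describe the $T$-fixed points of $\PP(\overline{T\cdot\mu})$ directly -- they are exactly the classes of sums of weight vectors sharing a common weight -- and invoke Theorem~\ref{conv2}(i); this still requires the same degeneration to produce the fixed points lying over the vertices of $\CH_\mu$.)
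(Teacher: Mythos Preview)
Your proof is correct and follows essentially the same strategy as the paper's: compute $m_\ag$ explicitly as a convex combination of the weights $F_{ij}^k$ to obtain the inclusion $\subseteq$, then degenerate via one-parameter subgroups of $T$ to place each extreme point of $\CH_\mu$ in the image and invoke the convexity of $m_\ag(\overline{T\cdot\mu})$ from Theorem~\ref{conv1}(iii) for the reverse inclusion. Your handling of the degeneration (allowing the limit to be a sum of several weight vectors sharing the common weight $v$, and only asking for vertices rather than all $F_{ij}^k$) is in fact slightly more careful than the paper's explicit choice of $\alpha$, but the underlying idea is identical.
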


\begin{proof}
For any $E\in\tg^n$ and $\lambda\in V$ one has that
$$
E\cdot\lambda = \sum \la E,F_{ij}^k\ra c(\lambda)_{ij}^k\,\mu_{ijk},
$$
so it follows from \eqref{defmm} that
\begin{equation}\label{mmdiag}
m_\ag(\lambda) =\tfrac{2}{|\lambda|^2}\sum\limits_{i<j}\left(c(\lambda)_{ij}^k\right)^2
F_{ij}^k\in\CH_\lambda, \qquad\forall\lambda\in V.
\end{equation}
In particular, $m\left(\overline{T\cdot\mu}\right)\cap\tg^n$ is contained in $\CH_\mu$ for any $\mu\in V$.

On the other hand, since
$$
c(h\cdot\mu)_{ij}^k = \frac{h_k}{h_ih_j}c(\mu)_{ij}^k, \qquad\forall h:=\Diag(h_1,\dots,h_n)\in T,
$$
one obtains that $\CH_\lambda\subset\CH_\mu$ for any $\lambda\in\overline{T\cdot\mu}$, which implies that $m_\ag(\overline{T\cdot\mu})\subset\CH_\mu$ by \eqref{mmdiag}.  But $\mu_{ijk}\in\overline{T\cdot\mu}$ for any $c(\mu)_{ik}^k\ne 0$; indeed, $e^{t\alpha}\cdot\mu$ converges to $\mu_{ij}^k$ as $t\to\infty$ for $\alpha\in\tg^n$ defined by $\alpha_r=1$ for $r=i,j$ and equal to $2$ otherwise.  This implies that if $c(\mu)_{ij}^k\ne 0$, then $F_{ij}^k\in m_\ag(\overline{T\cdot\mu})$, which is convex by Theorem \ref{conv1}, (iii), and so $\CH_\mu\subset m_\ag(\overline{T\cdot\mu})$, concluding the proof.
\end{proof}

An alternative proof of the above lemma can be given by using Theorem \ref{conv2}, (i) and the fact that the $T$ fixed points are given by
$$
\PP(V)^T=\left\{ [\mu_{ijk}]\right\}, \qquad \overline{T\cdot[\mu]}^T=\left\{[\mu_{ijk}]:c(\mu)_{ij}^k\ne 0\right\},
$$
and their $m$-images by
$$
m\left(\PP V^T\right)=\left\{ F_{ij}^k\right\}, \qquad m\left(\overline{T\cdot[\mu]}^T\right)=\left\{F_{ij}^k:c(\mu)_{ij}^k\ne 0\right\}.
$$

We now show that $\overline{T\cdot\mu}$ is actually determined by $\CH_\mu$, which is in a sense a converse to Lemma \ref{mtoro}.  For each subset $J\subset I_\mu:=\{ (i,j,k):c(\mu)_{ij}^k\ne 0\}$, consider the bracket
$$
\lambda_J:=\sum_{(i,j,k)\in J} c(\mu)_{ij}^k\,\mu_{ijk}.
$$
Note that $m_\ag\left(\overline{T\cdot\lambda_J}\right) = \CH\left(F_{ij}^k:(i,j,k)\in J\right)$ (see \eqref{mtoro}).  We recall that some basic convex geometry terminology was given in Section \ref{convex-sec}.

\begin{lemma}\label{A-deg}
The closure of the orbit $T\cdot\mu$ is given by
$$
\overline{T\cdot\mu} =\left\{\lambda_J:\CH\left(F_{ij}^k:(i,j,k)\in J\right)\;\mbox{is a face of}\; \CH_\mu\right\}.
$$
Moreover, if $c(\mu)_{ij}^k\ne 0$, then $\mu_{ij}^k\in\overline{T\cdot\mu}$ and $F_{ij}^k$ is an extreme point of $\CH_\mu$.
\end{lemma}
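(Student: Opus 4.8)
The plan is to treat Lemma~\ref{A-deg} as the standard description of the closure of a linear torus orbit in terms of its weight polytope, which here is $\CH_\mu$. The only computation needed is the one already used for Lemma~\ref{mtoro}: writing $h=\exp(t)\in T$ with $t\in\tg^n$, one has $c(h\cdot\mu)_{ij}^k=\tfrac{h_k}{h_ih_j}\,c(\mu)_{ij}^k=e^{\langle t,F_{ij}^k\rangle}c(\mu)_{ij}^k$. Hence any $\nu\in\overline{T\cdot\mu}$ has support $I_\nu:=\{(i,j,k):c(\nu)_{ij}^k\ne 0\}\subseteq I_\mu$, and for $(i,j,k)\in I_\nu$ the ratio $c(\nu)_{ij}^k/c(\mu)_{ij}^k$ is a strictly positive limit of numbers $e^{\langle t_m,F_{ij}^k\rangle}$. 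So it suffices to decide which $J\subseteq I_\mu$ occur as a support $I_\nu$ and to check that the resulting $\nu$ actually lies in the orbit $T\cdot\lambda_J$, not merely in its closure; here $\lambda_J$ is understood with $J$ \emph{saturated} (containing every $(i,j,k)\in I_\mu$ with $F_{ij}^k\in\CH(F_{pq}^r:(p,q,r)\in J)$), which is automatic when the basis is nice, since then $c(\mu)_{ij}^k\ne 0$ forces $k\notin\{i,j\}$ and the weights $F_{ij}^k$, $(i,j,k)\in I_\mu$, are pairwise distinct.

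For the inclusion $\overline{T\cdot\mu}\supseteq\{\lambda_J\}$, suppose $\mathcal F:=\CH(F_{ij}^k:(i,j,k)\in J)$ is a face of the convex polytope $\CH_\mu$, with $J$ saturated. Since every face of a convex polytope is exposed (Section~\ref{convex-sec}), there is $\beta\in\tg^n$ whose functional is maximized over $\CH_\mu$ exactly on $\mathcal F$; using $\langle I,F_{ij}^k\rangle=\tr F_{ij}^k=-1$ for all triples, I would replace $\beta$ by $\beta+cI$, with $c$ that maximum value, to get $\langle\beta,F_{ij}^k\rangle=0$ for $(i,j,k)\in J$ and $\langle\beta,F_{ij}^k\rangle<0$ for $(i,j,k)\in I_\mu\setminus J$. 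Then $\exp(s\beta)\cdot\mu=\sum_{(i,j,k)\in I_\mu}e^{s\langle\beta,F_{ij}^k\rangle}c(\mu)_{ij}^k\,\mu_{ijk}\longrightarrow\lambda_J$ as $s\to+\infty$, so $\lambda_J\in\overline{T\cdot\mu}$, and $T\cdot\lambda_J\subseteq\overline{T\cdot\mu}$ since the latter is closed and $T$-stable.

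For the reverse inclusion, I would take $\nu=\lim_m h_m\cdot\mu$ with $h_m=\exp(t_m)$ and pass to a subsequence along which $\langle t_m,F_{ij}^k\rangle$ converges in $[-\infty,+\infty]$ for every $(i,j,k)\in I_\mu$; boundedness of $h_m\cdot\mu$ excludes $+\infty$, and $J:=I_\nu$ is precisely the set of triples with a finite limit $c_{ij}^k$. With $L:=\spann\{F_{ij}^k:(i,j,k)\in J\}$, the projections $\pi_L(t_m)$ must stay bounded (otherwise a normalized limit would be a unit vector of $L$ orthogonal to all of $L$), so along a further subsequence $\pi_L(t_m)\to t'\in L$ with $\langle t',F_{ij}^k\rangle=c_{ij}^k$ for $(i,j,k)\in J$; then $\exp(t')\cdot\lambda_J=\nu$, so $\nu\in T\cdot\lambda_J$. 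To see that $\CH(F_{ij}^k:(i,j,k)\in J)$ is a face of $\CH_\mu$, I would run the escaping part $t_m-\pi_L(t_m)\in L^\perp$ through the same normalization: its limit direction $\beta$ satisfies $\langle\beta,F_{ij}^k\rangle=0$ on $J$ and $\le 0$ on $I_\mu\setminus J$, so $\langle\beta,\cdot\rangle$ is maximized over $\CH_\mu$ on a face $\mathcal F'\supseteq\CH(F_{ij}^k:(i,j,k)\in J)$, and a finite descending induction (replace $\CH_\mu$ by $\mathcal F'$ and repeat) exhibits the latter as a face. The ``moreover'' is a short separate check for a nice basis: for $(i,j,k)\in I_\mu$ no convex combination of the other weights $F_{pq}^r$, $(p,q,r)\in I_\mu$, equals $F_{ij}^k$ — comparing the $(k,k)$-, $(i,i)$- and $(j,j)$-entries forces any such combination to be supported on $(i,j,k)$ alone — so $\{F_{ij}^k\}$ is a vertex of $\CH_\mu$, and applying the first inclusion with $J=\{(i,j,k)\}$ gives $\mu_{ijk}\in\overline{T\cdot\mu}$.

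The step I expect to be the main obstacle is the reverse inclusion, and within it the claim that the support $I_\nu$ of a limit point always spans a face of $\CH_\mu$: one must split the possibly unbounded sequence $t_m$ into its bounded component inside $L=\spann\{F_{ij}^k:(i,j,k)\in I_\nu\}$ and its escaping component in $L^\perp$, extract the right limit directions, and carry out the finite descent through faces without circularity (the degenerate case where the exposed face $\mathcal F'$ fails to shrink has to be absorbed, e.g.\ by first peeling off the direction $I$ and the directions invisible to the structure constants of $\mu$). The realization claim — that $\nu$ lies in $T\cdot\lambda_J$ and not merely in $\overline{T\cdot\lambda_J}$ — rests on the same boundedness of $\pi_L(t_m)$ and is the one point that goes beyond what Section~\ref{git} already records. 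A cleaner alternative would be to combine Theorem~\ref{conv2}(i) with the description of the $T$-fixed points of $\overline{T\cdot[\mu]}$ noted just after Lemma~\ref{mtoro}: since those fixed points are the $[\mu_{ijk}]$ with $c(\mu)_{ij}^k\ne 0$ and $m_\ag$ carries them onto the vertices of $\CH_\mu$, the orbit stratification of $\overline{T\cdot\mu}$ is forced by the face lattice of $\CH_\mu$, leaving one only to match each face with its $\lambda_J$.
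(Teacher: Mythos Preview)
Your argument is essentially correct but takes a genuinely different route from the paper. The paper's proof is a two-line affair: it quotes \cite[Theorem 1.1,(ii)]{BhmLfn} to the effect that $\lambda\in\overline{T\cdot\mu}$ if and only if $\lambda=\lim_{t\to\infty}e^{t\alpha}\cdot\mu$ for some $\alpha\in\tg^n$; since such a one-parameter limit has $c(\lambda)_{ij}^k=c(\mu)_{ij}^k$ when $\la\alpha,F_{ij}^k\ra=0$ and $0$ when $\la\alpha,F_{ij}^k\ra<0$, one immediately gets $\lambda=\lambda_J$ with $J$ the zero set of $\la\alpha,\cdot\ra$, and the existence of such an $\alpha$ is exactly the condition that $\CH(F_{ij}^k:(i,j,k)\in J)$ be an (exposed) face of $\CH_\mu$. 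So the paper outsources precisely the step you flag as the main obstacle --- recovering a one-parameter direction from an arbitrary convergent sequence --- to the real Hilbert--Mumford criterion of B\"ohm--Lafuente.

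Your direct approach, splitting $t_m$ into a bounded piece in $L=\spann\{F_{ij}^k:(i,j,k)\in I_\nu\}$ and an escaping piece in $L^\perp$ and then descending through exposed faces, is the standard hands-on proof of that criterion for diagonal tori and is fine as a plan; it buys self-containment at the cost of the technical induction you describe. Your $\supseteq$ direction (exposing a face, shifting by $cI$) is identical to what the paper does. One remark: your restriction of the ``moreover'' to nice bases is more cautious than the paper, which states it in general; in fact, once the main statement is established, $\mu_{ijk}\in\overline{T\cdot\mu}$ forces $\{F_{ij}^k\}$ to be a face and hence an extreme point, so the two assertions of the ``moreover'' stand or fall together, and the issue you raise about repeated weights (when $k\in\{i,j\}$) is a genuine subtlety that the paper glosses over --- your saturation convention is the honest fix.
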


\begin{proof}
It follows from \cite[Theorem 1.1, (ii)]{BhmLfn} that a Lie bracket $\lambda$ belongs to $\overline{T\cdot\mu}$ if and only if there exists $\alpha\in\tg^n$ such that $e^{t\alpha}\cdot\mu$ converges to $\lambda$, as $t\to\infty$.  In particular, $\lambda=\lambda_J$ for some $J\subset I_\mu$, and such convergence is equivalent to $\la\alpha,F_{ij}^k\ra=0$ for any $(i,j,k)\in J$ and negative otherwise.  But the existence of an $\alpha\in\tg^n$ with such properties is necessary and sufficient to have that $\CH\left(F_{ij}^k:(i,j,k)\in J\right)$ is a face of $\CH_\mu$, concluding the proof.
\end{proof}

\begin{corollary}
$\Diagg\left(m\left(T\cdot\lb\right)\right) = \CH_\mu^\circ$.
\end{corollary}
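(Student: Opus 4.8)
The plan is to derive this directly from Lemmas \ref{mtoro} and \ref{A-deg}, together with the elementary fact recalled in Section \ref{convex-sec} that a convex polytope is the disjoint union of the relative interiors of its faces. Recall from Section \ref{git} that $m_\ag=\Diagg\circ\, m$, so the statement reads $m_\ag(T\cdot\mu)=\CH_\mu^\circ$; here, as in the two preceding lemmas, $\mu$ denotes the Lie bracket $\lb$ of $\ngo$ and I write $I_\mu:=\{(i,j,k):c(\mu)_{ij}^k\ne 0\}$.

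For the inclusion $m_\ag(T\cdot\mu)\subseteq\CH_\mu^\circ$, I would observe that the torus action merely rescales structure constants: from $c(h\cdot\mu)_{ij}^k=\tfrac{h_k}{h_ih_j}c(\mu)_{ij}^k$ one gets $I_\lambda=I_\mu$, and in particular $\CH_\lambda=\CH_\mu$, for every $\lambda\in T\cdot\mu$. By \eqref{mmdiag} the point $m_\ag(\lambda)$ is then a convex combination of the $F_{ij}^k$ with $(i,j,k)\in I_\mu$ in which every coefficient $\tfrac{2}{|\lambda|^2}(c(\lambda)_{ij}^k)^2$ is strictly positive, whence $m_\ag(\lambda)\in\CH^\circ\big(F_{ij}^k:(i,j,k)\in I_\mu\big)=\CH_\mu^\circ$ by the definition of $\CH^\circ$.

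For the reverse inclusion $\CH_\mu^\circ\subseteq m_\ag(T\cdot\mu)$, Lemma \ref{mtoro} gives $m_\ag\big(\overline{T\cdot\mu}\big)=\CH_\mu$, so it suffices to show that every $\lambda\in\overline{T\cdot\mu}$ with $m_\ag(\lambda)\in\CH_\mu^\circ$ in fact lies in $T\cdot\mu$. By Lemma \ref{A-deg} such a $\lambda$ lies in a torus orbit $T\cdot\lambda_J$ with $\CH\big(F_{ij}^k:(i,j,k)\in J\big)$ a face of $\CH_\mu$, and then $I_\lambda=J$ and, by \eqref{mmdiag} again, $m_\ag(\lambda)\in\CH_\lambda=\CH\big(F_{ij}^k:(i,j,k)\in J\big)$. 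Since a proper face of a polytope is disjoint from the polytope's relative interior, the hypothesis $m_\ag(\lambda)\in\CH_\mu^\circ$ forces this face to be $\CH_\mu$ itself; because all the $F_{ij}^k$ with $(i,j,k)\in I_\mu$ are extreme points of $\CH_\mu$ (Lemma \ref{A-deg}), this gives $J=I_\mu$, hence $\lambda\in T\cdot\lambda_{I_\mu}=T\cdot\mu$, as required.

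The argument is short once the two lemmas are in hand, and I do not expect a serious obstacle; the only point that genuinely uses Lemma \ref{A-deg} — and is, I think, the one requiring care — is that a bracket in $\overline{T\cdot\mu}$ cannot have an arbitrary support $J\subseteq I_\mu$ but only one indexing a face of $\CH_\mu$, since the convex hull of an arbitrary subset of the vertices of a polytope may well meet its relative interior. (A minor subtlety is that the $F_{ij}^k$, $(i,j,k)\in I_\mu$, may coincide for distinct triples, so this step has to be run a little carefully.) If the face-bookkeeping turns out to be awkward, a self-contained substitute is available: by \eqref{mmdiag} the map $x\mapsto m_\ag(e^x\cdot\mu)$ on $\tg^n$ is the gradient of the convex ``log-sum-exp'' function $x\mapsto\tfrac12\log\sum_{(i,j,k)\in I_\mu}e^{2\la x,F_{ij}^k\ra}(c(\mu)_{ij}^k)^2$, and the image of the gradient of such a function is classically the relative interior of the convex hull of the $F_{ij}^k$, $(i,j,k)\in I_\mu$, i.e.\ $\CH_\mu^\circ$.
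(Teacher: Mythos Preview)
Your proposal is correct and follows exactly the route the paper intends: the corollary is stated in the paper without proof, immediately after Lemmas \ref{mtoro} and \ref{A-deg}, and your argument supplies the details of how it follows from those two results (the forward inclusion from \eqref{mmdiag} with strictly positive coefficients, the reverse by combining $m_\ag(\overline{T\cdot\mu})=\CH_\mu$ with the face description of boundary orbits). Your remark that the coincidence of weights $F_{ij}^k$ requires a little care is well taken---the cleanest way around it is to note that any face-$J$ from Lemma \ref{A-deg} comes with a separating $\alpha$, so $\CH(F_{ij}^k:(i,j,k)\in J)=\CH_\mu$ forces $\langle\alpha,F_{ij}^k\rangle=0$ for every $(i,j,k)\in I_\mu$ and hence $J=I_\mu$.
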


The situation in Example \ref{n4nice} concerning convexity properties can drastically change if we consider a different basis for that Lie algebra, as next example shows.

\begin{example}\label{n4nonice}
If the Lie bracket is defined by
$$
[e_1,e_2]=e_3+e_4, \quad [e_1,e_3]=e_4,
$$
then $\overline{T\cdot\lb}$ is given by the brackets $\mu=\mu(x,y,z)$ such that
$$
\mu(e_1,e_2)=xe_3+ye_4, \quad \mu(e_1,e_3)=ze_4,  \qquad x,y,z\geq 0.
$$
According to Lemma \ref{A-deg}, $\overline{T\cdot\lb}$ consists, besides of the open and dense orbit $T\cdot\lb$ ($x,y,z>0$), of other six nonzero $T$-orbits defined by the (external) faces of the triangle $\CH_{\lb}=\CH(F_{12}^3, F_{12}^4, F_{13}^4)$.  The moment map is given by
\begin{align*}
m(\mu) =& \frac{2}{|\mu|^2}\left[\begin{array}{cccc}
-x^2-y^2-z^2&&&\\
&-x^2-y^2&-yz&\\
&-yz&x^2-z^2&xy\\
&&xy&y^2+z^2
\end{array}\right] \\
=& \frac{1}{x^2+y^2+z^2}\left(x^2F_{12}^3+y^2F_{12}^4+z^2F_{13}^4-yzF_5+xyF_6\right),
\end{align*}
where
$$
F_5:=\left[\begin{smallmatrix}
0&&\\
&0&1&\\
&1&0&\\
&&&0
\end{smallmatrix}\right], \qquad
F_6:=\left[\begin{smallmatrix}
0&&\\
&0&&\\
&&0&1\\
&&1&0
\end{smallmatrix}\right].
$$
Thus $m(T\cdot\lb)\cap\tg^4=\emptyset$,
$$
m\left(\overline{T\cdot\lb}\right)\cap\tg^4=\CH(F_{12}^3, F_{13}^4)\cup \left\{F_{12}^4\right\},
$$
and recall from Lemma \ref{mtoro} that $\Diagg\left(m\left(\overline{T\cdot\lb}\right)\right)=\CH(F_{12}^3, F_{12}^4, F_{13}^4)$.  Nevertheless, it easily follows from Corollary \ref{main} that $\dg_{RN}=\{ (0,d,d,d):d>0\}$ by using only $F_{12}^4$.
\end{example}

\subsection{Nilpotent Lie algebras with a nice basis}
The better behavior of $m\left(\overline{T\cdot\lb}\right)\cap\tg^4$ in Example \ref{n4nice} compared to what happened in Example \ref{n4nonice} is due to special properties of the basis chosen.

\begin{definition}\label{nice-def}
A basis $\{ e_1,\dots,e_n\}$ of a Lie algebra is said to be {\it nice} if every bracket $[e_i,e_j]$ is a scalar multiple of some element $e_k$ in the basis and two different brackets $[e_i,e_j]$, $[e_r,e_s]$ can be a nonzero multiple of the same $e_k$ only if $\{ i,j\}$ and $\{ r,s\}$ are either equal or disjoint.
\end{definition}

\begin{lemma}\label{nice}\cite{nicebasis}
The following conditions are equivalent:
\begin{itemize}
\item[(i)] $m\left(\overline{T\cdot\mu}\right)\cap\tg^n = \CH_\mu$.

\item[(ii)] $\{ e_i\}$ is a nice basis for $\mu$.

\item[(iii)] $m\left(T\cdot\mu\right)\subset\tg^n$.
\end{itemize}
\end{lemma}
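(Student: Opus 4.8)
The plan is to prove the two equivalences (ii)$\Leftrightarrow$(iii) and (i)$\Leftrightarrow$(iii). I would use freely, from Lemma \ref{mtoro}, the inclusion $m(\overline{T\cdot\mu})\cap\tg^n\subseteq\CH_\mu$ valid for every $\mu$, and from Lemma \ref{A-deg} that $\overline{T\cdot\mu}$ is the union of the $T$-orbits of the brackets $\lambda_J$ for which $\CH(F_{ij}^k:(i,j,k)\in J)$ is a face of $\CH_\mu$, that each such $F_{ij}^k$ is an extreme point, and hence (Section \ref{convex-sec}) that $\CH_\mu$ is the disjoint union of the relative interiors $\CH_{\lambda_J}^\circ$ of its faces. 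The two substantive ingredients are an \emph{easy core} --- a nice basis forces $m$ to be diagonal --- and a \emph{hard core} --- a non-nice basis forces $m$ to be non-diagonal somewhere on $T\cdot\mu$; the latter carries the weight of the argument.

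For the easy core I would first write down the off-diagonal entries of $m$. Applying \eqref{defmm} with $E=\tfrac12(E_{pq}+E_{qp})$, $p\ne q$, and expanding $E\cdot\mu$ by the formula for the $\g$-representation on $V$, one sees that $2|\mu|^2\, m(\mu)_{pq}$ is a finite sum of terms $\pm\,c(\mu)_{ab}^c\,c(\mu)_{a'b'}^{c'}$ in which $(a',b',c')$ is obtained from $(a,b,c)$ by replacing one occurrence of the index $q$ by $p$: either in the output slot (so $\{a',b'\}=\{a,b\}$ and $\{c',c\}=\{p,q\}$), or in an input slot (so $c'=c$ and $\{a',b'\}$ is $\{a,b\}$ with the entry $q$ replaced by $p$); this matches the off-diagonal entries appearing in Example \ref{n4nonice}. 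If $\{e_i\}$ is a nice basis for $\mu$, no such term can have both structure constants nonzero: the first possibility says $[e_a,e_b]$ has nonzero components along two distinct basis vectors $e_p,e_q$, and the second says $[e_r,e_q]$ and $[e_r,e_p]$ (with $r$ the common index) both have a nonzero $e_c$-component although $\{r,q\},\{r,p\}$ are neither equal nor disjoint. Hence $m(\mu)$ is diagonal. Since being a nice basis for a bracket depends only on which structure constants vanish, it is preserved under the $T$-action and under passing to any $\lambda_J$, so $m$ is diagonal on all of $\overline{T\cdot\mu}$; in particular (ii)$\Rightarrow$(iii) (and (ii)$\Rightarrow$(i), with Lemma \ref{mtoro}).

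I expect (i)$\Leftrightarrow$(iii) to be essentially formal. For (iii)$\Rightarrow$(i): if $m(T\cdot\mu)\subseteq\tg^n$ then, writing $\lambda_J=\lim_{t\to\infty}e^{t\alpha}\cdot\mu$ and using that $he^{t\alpha}\in T$ and $m$ is continuous, $m(h\cdot\lambda_J)=\lim_t m(he^{t\alpha}\cdot\mu)$ is a limit of diagonal matrices, so $m(T\cdot\lambda_J)\subseteq\tg^n$ and therefore $m(T\cdot\lambda_J)=\Diagg(m(T\cdot\lambda_J))=\CH_{\lambda_J}^\circ$ by the corollary to Lemma \ref{A-deg}; taking the union over all $J$ gives $m(\overline{T\cdot\mu})\supseteq\bigcup_J\CH_{\lambda_J}^\circ=\CH_\mu$, which with Lemma \ref{mtoro} is (i). For (i)$\Rightarrow$(iii): the function $D_0\mapsto\log|\exp(D_0)\cdot\mu|^2$ on $\tg^n$ is the logarithm of a sum of exponentials of linear functionals, hence convex, with gradient a positive multiple of $m_\ag(\exp(D_0)\cdot\mu)$; so $m_\ag(g_1\cdot\mu)=m_\ag(g_2\cdot\mu)$ forces this function to be affine in the direction $\log g_1-\log g_2$, which happens only if the numbers $\langle\log g_1-\log g_2,F_{ij}^k\rangle$, $(i,j,k)\in I_\mu$, all coincide, i.e.\ $g_1\cdot\mu$ and $g_2\cdot\mu$ are positive multiples of one another and $m(g_1\cdot\mu)=m(g_2\cdot\mu)$. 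Now for any $\nu=h\cdot\mu$ the point $\Diagg(m(\nu))=m_\ag(\nu)$ lies in $\CH_\mu^\circ$, so by (i) it equals $m(\lambda)$ for some $\lambda\in\overline{T\cdot\mu}$; since $m(\lambda)=m_\ag(\lambda)$ lies in the relative interior of $\CH_\mu$, the only face $\CH_{\lambda_J}$ containing it is $\CH_\mu$ itself, so $\lambda=g\cdot\mu$, whence $m_\ag(g\cdot\mu)=m_\ag(\nu)$ and thus $m(\nu)=m(g\cdot\mu)=m(\lambda)\in\tg^n$.

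The main obstacle is the hard core, (iii)$\Rightarrow$(ii), which I would prove by contraposition. Assume $\{e_i\}$ is not a nice basis for $\mu$; then there is a ``bad pair'' of nonzero structure constants, of output type ($c(\mu)_{ab}^p,c(\mu)_{ab}^q\ne0$, $p\ne q$) or of input type ($c(\mu)_{\{r,q\}}^c,c(\mu)_{\{r,p\}}^c\ne0$, $p\ne q$, $\{r,q\},\{r,p\}$ overlapping), and it feeds the $(p,q)$-entry of $m$. By the formula from the easy core, for $h=\exp(D_0)\in T$ one has $c(h\cdot\mu)_{ij}^k=e^{\langle D_0,F_{ij}^k\rangle}c(\mu)_{ij}^k$, so $2|h\cdot\mu|^2\,m(h\cdot\mu)_{pq}=\sum_B\varepsilon_B\,\gamma_B\,e^{\langle D_0,w_B\rangle}$, summed over the finitely many bad pairs $B$ feeding that entry, with $\varepsilon_B=\pm1$, $w_B\in\tg^n$, and $\gamma_B$ (a product of two structure constants of $\mu$) nonzero exactly for the bad pairs present in $\mu$. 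Regarded as a Laurent polynomial in $h\in(\RR_{>0})^n$, this is not identically zero as soon as the weight $w_{B_0}$ of the bad pair $B_0$ furnished by the failure of niceness differs from $w_B$ for every other bad pair feeding the $(p,q)$-entry (so that its monomial cannot be cancelled), and then some $h\in T$ makes $m(h\cdot\mu)_{pq}\ne0$ and (iii) fails. The delicate point I expect to be the crux is exactly this separation of weights: for the output type $w_{B_0}=E_{pp}+E_{qq}-2E_{aa}-2E_{bb}$, which is distinguished from the weights of the other output pairs (these recover the pair $\{a,b\}$) and from those of the input pairs (by the coefficient of $E_{pp}$) \emph{provided the index patterns are generic}; the few degenerate configurations, such as $\{p,q\}\cap\{a,b\}\ne\emptyset$, have to be disposed of by a short separate argument, and similarly for the input type. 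Once a surviving, hence dominating, monomial is secured, the cycle closes and the three conditions are equivalent.
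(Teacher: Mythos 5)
Your route is genuinely different from the paper's: the paper does not prove (ii)$\Leftrightarrow$(iii) at all but quotes it verbatim from \cite{nicebasis}, and it obtains (i)$\Rightarrow$(iii) by invoking the fact that $m_\ag\colon T\cdot[\mu]\longrightarrow m_\ag(T\cdot\mu)$ is a diffeomorphism (\cite[Proposition 3]{HnzStt}). Your (ii)$\Rightarrow$(iii) computation of the off-diagonal entries of $m$ is correct and is essentially the content of the cited result; your (iii)$\Rightarrow$(i) limit argument (diagonality passes to the orbit closure by continuity, then Lemma \ref{mtoro} and the decomposition of $\CH_\mu$ into relative interiors of faces finish it) is fine; and your (i)$\Rightarrow$(iii) argument via convexity of $D_0\mapsto\log|\exp(D_0)\cdot\mu|^2$, which is strictly convex except in the directions that merely rescale $\mu$, is a clean elementary substitute for the Heinzner--St\"otzel injectivity statement. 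So three of the four implications you need are sound, and where they overlap with the paper they buy self-containedness at essentially no cost.

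The genuine gap is in (iii)$\Rightarrow$(ii), exactly where you locate the crux. You reduce to showing that the exponential polynomial $D_0\mapsto 2|h\cdot\mu|^2\, m(h\cdot\mu)_{pq}=\sum_B\varepsilon_B\gamma_B e^{\la D_0,w_B\ra}$ is not identically zero, and your criterion is that the weight $w_{B_0}$ of the bad pair witnessing non-niceness be distinct from the weights of all other bad pairs feeding the $(p,q)$-entry --- but you verify this only ``provided the index patterns are generic'' and explicitly defer the degenerate configurations (e.g.\ $\{p,q\}\cap\{a,b\}\ne\emptyset$, or an output-type weight colliding with an input-type weight) to ``a short separate argument'' that is never supplied. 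These configurations are not vacuous: nilpotent brackets in non-adapted bases do produce nonzero $c(\mu)_{ab}^{c}$ with $c\in\{a,b\}$ (for instance the Heisenberg algebra in the basis $e_1=X$, $e_2=Y$, $e_3=Z+X$ has $[e_1,e_2]=e_3-e_1$). Moreover, even when $w_{B_0}$ does collide with some $w_{B_1}$, it does not suffice to say the monomial ``cannot be cancelled''; one must either rule out cancellation of the coefficients or exhibit a different nonvanishing entry, and neither is done. As it stands, the hard implication is established only for $\mu$ whose failure of niceness is of generic combinatorial type; to close the lemma you must either complete that case analysis or, as the paper does, simply cite \cite{nicebasis} for (ii)$\Leftrightarrow$(iii).
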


\begin{proof}
The equivalence between parts (ii) and (iii) is precisely the result proved in \cite{nicebasis}.  Part (i) follows from (iii) and \eqref{mtoro}, so we only need to prove that part (i) implies (iii).  If $h\in T$, then $m_\ag(h\cdot\mu)=m(g\cdot\mu)$ for some $g\in T$ by (i) and \eqref{mtoro}.  But this implies that $h=tg$ for a nonzero $t\in\RR$ since $m_\ag:T\cdot[\mu]\longrightarrow m_\ag(T\cdot\mu)$ is a diffeomorphism (see \cite[Proposition 3]{HnzStt}) and thus $m(h\cdot\mu)\in\tg^n$, concluding the proof.
\end{proof}

The following result was proved in \cite[Section 4]{Nkl2}.

\begin{corollary}\label{niceD}
If $\ngo$ is a nilpotent Lie algebra with a nice basis, then $\Diagg(D)\in\Der(\ngo)$ for any $D\in\Der(\ngo)$.
\end{corollary}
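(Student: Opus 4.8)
The plan is to reduce the statement to a purely diagonal compatibility condition and then read that condition off from the nice bracket relations. Fix the nice basis $\{ e_1,\dots,e_n\}$, let $\mu$ be the corresponding Lie bracket, and for $D\in\Der(\ngo)$ write $De_i=\sum_p d_{pi}e_p$, so that $\Diagg(D)=\sum_i d_{ii}E_{ii}$. Applying the identity $E\cdot\lambda=\sum\la E,F_{ij}^k\ra c(\lambda)_{ij}^k\,\mu_{ijk}$ recalled in the proof of Lemma \ref{mtoro} to $E=\Diagg(D)\in\tg^n$, and noting that $\la\Diagg(D),F_{ij}^k\ra=d_{kk}-d_{ii}-d_{jj}$, one gets
$$\Diagg(D)\cdot\mu=\sum_{i<j,\,k}(d_{kk}-d_{ii}-d_{jj})\,c(\mu)_{ij}^k\,\mu_{ijk}.$$
Since $\{\mu_{ijk}\}$ is a basis of $V$ and $\Diagg(D)\cdot\mu=0$ is equivalent to $\Diagg(D)\in\Der(\ngo)$, it therefore suffices to prove that $d_{kk}=d_{ii}+d_{jj}$ whenever $c(\mu)_{ij}^k\neq 0$.

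To prove the latter I would argue one bracket at a time. For $a\neq b$ write $\mu(e_a,e_b)=\gamma_{ab}e_{\kappa_{ab}}$ — legitimate precisely because the basis is nice — with the conventions $\gamma_{ba}=-\gamma_{ab}$, $\kappa_{ba}=\kappa_{ab}$, and $\gamma_{ab}=0$ exactly when the bracket vanishes. Fix $a<b$ with $\gamma_{ab}\neq 0$ and put $k=\kappa_{ab}$. Applying the Leibniz rule to $\mu(e_a,e_b)=\gamma_{ab}e_k$ and comparing the coefficients of $e_k$ on both sides gives
$$\gamma_{ab}\,d_{kk}=\sum_{p\,:\,\gamma_{pb}\neq 0,\ \kappa_{pb}=k}d_{pa}\,\gamma_{pb}\ +\sum_{p\,:\,\gamma_{ap}\neq 0,\ \kappa_{ap}=k}d_{pb}\,\gamma_{ap}.$$
Now Definition \ref{nice-def} does the work: in the first sum the brackets $\mu(e_p,e_b)$ and $\mu(e_a,e_b)$ are nonzero multiples of the same $e_k$ and share the index $b$, hence cannot be disjoint and must therefore coincide, forcing $p=a$; the second sum collapses to $p=b$ in the same way. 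Thus $\gamma_{ab}d_{kk}=(d_{aa}+d_{bb})\gamma_{ab}$, and dividing by $\gamma_{ab}\neq 0$ yields $d_{kk}=d_{aa}+d_{bb}$. As $a<b$ runs over the pairs with $\gamma_{ab}\neq 0$, this is exactly the required identity for all $(i,j,k)$ with $c(\mu)_{ij}^k\neq 0$, so $\Diagg(D)\cdot\mu=0$, i.e.\ $\Diagg(D)\in\Der(\ngo)$.

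I do not expect a genuine obstacle here: once the reduction of the first paragraph is in place, the computation is routine. The one point deserving care is the role of niceness — without it the two sums above would carry extra terms (coming from pairs that meet $\{a,b\}$ in $b$ only, say) which need not cancel, and indeed Example \ref{n4nonice} exhibits a non-nice basis for which $\Diagg(D)$ fails to be a derivation. Equivalently, one can organize the argument via $\ad(\tg^n)$-weights: for a nice basis no root vector $E_{rs}$ ($r\neq s$) carries a weight vector $\mu_{i'j'k'}$ with $c(\mu)_{i'j'k'}\neq 0$ onto a nonzero multiple of a weight vector $\mu_{ijk}$ with $c(\mu)_{ij}^k\neq 0$, so the off-diagonal part of $D$ contributes nothing to $D\cdot\mu$ in the weights actually occurring in $\mu$; projecting $D\cdot\mu=0$ onto those weights then forces $\Diagg(D)\cdot\mu=0$ directly.
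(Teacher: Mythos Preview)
Your argument is correct. The reduction in the first paragraph to the conditions $d_{kk}=d_{ii}+d_{jj}$ for $c(\mu)_{ij}^k\neq 0$ is exactly right, and the Leibniz-rule computation together with the disjoint-or-equal clause of Definition~\ref{nice-def} cleanly forces $p=a$ (resp.\ $p=b$) in the two sums. The closing weight-space remark is also accurate.

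The paper, however, proves this by a genuinely different route, via the moment map. It uses Lemma~\ref{nice} to get $m(h\cdot\lb)\in\tg^n$ for every $h\in T$; since $hDh^{-1}\in\Der(h\cdot\lb)$ and $\Diagg(hDh^{-1})=\Diagg(D)$, one obtains $\tr\big(\Diagg(D)\,m(h\cdot\lb)\big)=\tr\big((hDh^{-1})\,m(h\cdot\lb)\big)=0$ for all $h\in T$. Then Lemma~\ref{A-deg} (each $\mu_{ijk}$ with $c_{ij}^k\ne 0$ lies in $\overline{T\cdot\lb}$) and continuity give $\la\Diagg(D),F_{ij}^k\ra=0$, which is precisely your condition $d_{kk}=d_{ii}+d_{jj}$. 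So both arguments land on the same numerical identity; yours extracts it by a direct, entirely elementary expansion of $D\cdot\mu=0$, while the paper's proof is a two-line consequence of the moment-map machinery already developed (Lemmas~\ref{mtoro}--\ref{nice}). Your approach has the advantage of being self-contained and of making transparent exactly where niceness enters; the paper's approach shows that the result is really a shadow of the fact that $m(T\cdot\lb)\subset\tg^n$ characterizes nice bases.
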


\begin{proof}
For any $h\in T$ one has that $\tr{\Diagg(D)m(h\cdot\lb)} = \tr{Dm(h\cdot\lb)} = 0$, thus $\tr{\Diagg(D)F_{ij}^k}=0$ for each $c_{ij}^k\ne 0$ by Lemma \ref{A-deg}, that is, $\Diagg(D)\in\Der(\ngo)$.
\end{proof}

The above lemma together with Corollary \ref{main} also give the following.

\begin{corollary}\label{cor-nice}
Let $\ngo$ be a nilpotent Lie algebra endowed with a nice basis.  Then the open convex cone in $\dg$ given by
$$
\left(\RR_{>0}\CH_{\lb} + \tg^n_{>0}\right)\cap \{ D\in\dg:\tr{D}>0\},
$$
is contained in $\dg_{RN}$.
\end{corollary}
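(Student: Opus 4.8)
The plan is to deduce Corollary \ref{cor-nice} directly from the characterization of Ricci negative derivations in Corollary \ref{main} together with the nice-basis description of the moment map image provided by Lemma \ref{nice}. Concretely, suppose $D\in\dg$ satisfies $\tr{D}>0$ and $D\in\RR_{>0}\CH_{\lb}+\tg^n_{>0}$; I want to show $D$ is Ricci negative, i.e.\ that condition (iii) of Corollary \ref{main} holds.

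First I would write $D=rM+E$ with $r>0$, $M\in\CH_{\lb}$ and $E\in\tg^n_{>0}$. Since the basis is nice, Lemma \ref{nice} gives $m\left(\overline{T\cdot\lb}\right)\cap\tg^n=\CH_{\lb}$, so $M\in m\left(\overline{T\cdot\lb}\right)\cap\tg^n$. As $T\subset G_D$ always (see the discussion preceding Lemma \ref{mtoro}, or simply that $T$ centralizes every diagonal matrix), we have $\overline{T\cdot\lb}\subseteq\overline{G_D\cdot\lb}$, hence $M\in m\left(\overline{G_D\cdot\lb}\right)\cap\tg^n$. Therefore
$$
D=rM+E\in\RR_{>0}\, m\left(\overline{G_D\cdot\lb}\right)\cap\tg^n+\tg^n_{>0},
$$
which is exactly condition (iii) of Corollary \ref{main}, and so $D\in\dg_{RN}$. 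This shows $\left(\RR_{>0}\CH_{\lb}+\tg^n_{>0}\right)\cap\{D\in\dg:\tr{D}>0\}\subseteq\dg_{RN}$.

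It remains to observe that the set in question is indeed an open convex cone inside $\dg$. Openness is immediate because $\tg^n_{>0}$ is open in $\tg^n$, so $\RR_{>0}\CH_{\lb}+\tg^n_{>0}$ is open, and intersecting with the open half-space $\{\tr{D}>0\}$ keeps it open within $\dg$; here one uses that $\CH_{\lb}\subseteq\Der(\ngo)$ (the $F_{ij}^k$ with $c(\lb)_{ij}^k\neq0$ are weights killing $\lb$, hence lie in $\dg$, and $\tg^n_{>0}$ — well, one must be slightly careful: the cone lies in $\dg$ only after intersecting appropriately; in fact by Corollary \ref{niceD} diagonal parts of derivations are derivations, and $\CH_{\lb}+\tg^n_{>0}$ need not lie in $\dg$, which is why the statement only claims the displayed intersection is a cone in $\dg$). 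Convexity follows since $\RR_{>0}\CH_{\lb}$ and $\tg^n_{>0}$ are convex cones, their Minkowski sum is a convex cone, and the intersection with the convex cone $\{\tr{D}>0\}\cap\dg$ is again a convex cone.

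The only genuinely substantive input is Lemma \ref{nice}, which supplies the equality $m\left(\overline{T\cdot\lb}\right)\cap\tg^n=\CH_{\lb}$ that is generally false for non-nice bases (compare Example \ref{n4nonice}); everything else is formal bookkeeping with the cones and the already-established Corollary \ref{main}. I do not expect any real obstacle beyond making sure the ambient space for the openness/convexity claims is correctly taken to be $\dg$ rather than all of $\tg^n$, since $\CH_{\lb}\subseteq\dg$ but $\tg^n_{>0}\not\subseteq\dg$ in general.
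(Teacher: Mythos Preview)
Your proof is correct and follows exactly the approach the paper indicates: it says the corollary follows from Lemma \ref{nice} together with Corollary \ref{main}, and you spell this out by writing $D=rM+E$, using Lemma \ref{nice} to place $M\in m(\overline{T\cdot\lb})\cap\tg^n$, then the inclusion $T\subset G_D$ to land in condition (iii) of Corollary \ref{main}. Your additional remarks on openness and convexity, while a bit hedged, correctly identify the only subtlety (that $\tg^n_{>0}\not\subseteq\dg$ in general) and are more than the paper itself provides.
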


It follows from Theorem \ref{conv2}, (ii) that
$$
\CH\left(m\left(\overline{G_D\cdot\lb}\right)\right) = \Diag\left(m\left(\overline{G_D\cdot\lb}\right)\right),
$$
a convex polytope.  However, both $m\left(\overline{T\cdot\mu}\right)\cap\tg^n$ and $m\left(\overline{G_D\cdot\mu}\right)\cap\tg^n$ can be tricky subsets if the basis is not nice, as next example shows.

\begin{example}\label{n5nonice}
Let $\ngo$ be the $5$-dimensional nilpotent Lie algebra with basis $\{ e_1,\dots,e_5\}$ and Lie bracket
$$
[e_1,e_2]=e_3+e_4, \quad [e_1,e_3]=e_5, \quad [e_1,e_4]=e_5.
$$
It is easy to see that if $D$ is generic, then
$$
G_D=G:=\left\{\left[\begin{array}{cccc}
h_1&&&\\
&h_2&&\\
&&H&\\
&&&h_5
\end{array}\right]: H\in\Gl_2^+(\RR), \quad h_i>0\right\}.
$$
We consider $G$ acting on the cone $C\subset V$ of nilpotent Lie brackets $\mu=\mu(x,y,z,w)$ defined by
$$
\mu(e_1,e_2)=xe_3+ye_4, \quad \mu(e_1,e_3)=ze_5, \quad \mu(e_1,e_4)=we_5, \qquad x,y,z,w\geq 0.
$$
The moment map $m:C\smallsetminus\{ 0\}\longrightarrow\pg$ is given by
\begin{align*}
m(\mu) =& \frac{2}{|\mu|^2}\left[\begin{array}{ccccc}
-(x^2+y^2+z^2+w^2)&&&&\\
&-x^2-y^2&&&\\
&&x^2-z^2&xy-zw&\\
&&xy-zw&y^2-w^2&\\
&&&&z^2+w^2
\end{array}\right] \\
=& \frac{1}{x^2+y^2+z^2+w^2}\left(x^2F_{12}^3+y^2F_{12}^4+z^2F_{13}^5+w^2F_{14}^5+(xy-zw)F\right),
\end{align*}
where
$$
F:=\left[\begin{smallmatrix}
0&&&\\
&0&&&\\
&&0&1&\\
&&1&0&\\
&&&&0
\end{smallmatrix}\right].
$$
It is easy to check that $C\smallsetminus\{ 0\}$ is the disjoint union of three orbits: $G\cdot\lb$, $G\cdot\mu_{123}$ and $G\cdot\mu_{135}$; and the first one is given by
$$
G\cdot\lb=\{\mu:xz+yw\ne 0\}, \qquad \overline{G\cdot\lb}=C.
$$
This implies that
\begin{align}
m\left(\overline{G\cdot\lb}\right)\cap\tg^5 =& \{ aF_{12}^3+bF_{12}^4+cF_{13}^5+dF_{14}^5: a,b,c,d\geq 0, \label{image1}\\
&\quad a+b+c+d=1, \quad ab=cd\}  \notag\\
=& \{ \Diag(-1,-a-b,a-c,b-d,c+d)\in\tg^5:a,b,c,d\geq 0, \label{image2}\\
&\quad a+b+c+d=1, \quad ab=cd\}. \notag
\end{align}
Since
$$
F_{12}^3-F_{12}^4=F_{14}^5-F_{13}^5 = \Diag(0,0,1,-1,0) \perp (0,-1,1,1,-1) = F_{12}^4-F_{13}^5 = F_{12}^3-F_{14}^5,
$$
these four points $\left\{ F_{12}^3, F_{12}^4, F_{13}^5, F_{14}^5\right\}$ in $\tg^5$ are the vertices of a rectangle with center $F_0:=(-1,-\unm,0,0,\unm)$, which is precisely $\CH_{\lb}=\Diagg\left(m\left(\overline{G\cdot\lb}\right)\right)$.  It is not so hard to see by using \eqref{image1} that $m\left(\overline{G\cdot\lb}\right)\cap\tg^5$ is the union of two triangles, given by the convex hulls of $\{F_{12}^4,F_{13}^5,F_0\}$ and $\{F_{12}^3,F_{14}^5,F_0\}$, respectively.  Such computation becomes clearer if one translates everything to the origin by subtracting the vector $F_0$ from all the vectors involved.  The two Weyl chambers are given by
$$
(\ag_+)_1=\{\Diag(a_1,\dots,a_5):a_3\leq a_4\}, \qquad (\ag_+)_2=\{\Diag(a_1,\dots,a_5):a_3\geq a_4\},
$$
thus the convex polytope $m\left(\overline{G\cdot\lb}\right)\cap(\ag^\dg_+)_1$ can be obtained by adding to \eqref{image1} or \eqref{image2} the condition $a+d\leq b+c$, and so it coincides with the triangle $\{F_{12}^4,F_{13}^5,F_0\}$.  The other triangle corresponds to the other Weyl chamber.

Concerning $T$-orbits, it follows from Lemma \ref{A-deg} that $\overline{T\cdot\lb}$ consists of the open and dense orbit  $T\cdot\lb=\{\mu:xz=yw\ne 0\}$ and other eight nonzero $T$-orbits corresponding to the edges and vertexes of the rectangle $\CH_{\lb}$.  From \eqref{image1} we obtain that
\begin{align}
m\left(\overline{T\cdot\lb}\right)\cap\tg^5 =& \{ aF_{12}^3+bF_{12}^4+cF_{13}^5+dF_{14}^5: a,b,c,d\geq 0, \label{image3}\\
&\quad a+b+c+d=1, \quad ab=cd, \quad ac=bd\}.  \notag
\end{align}
It is easy to check that these conditions hold if and only if either $a=d=0$, or $b=c=0$, or $a=d$ and $b=c$, hence $m\left(\overline{T\cdot\lb}\right)\cap\tg^5$ is the union of three segments, $\overline{F_{12}^3F_{14}^5}$, $\overline{F_{12}^4F_{13}^5}$ and the one having as extremes their middle points.  The interior of this last segment coincide with $m\left(T\cdot\lb\right)\cap\tg^5$.
\end{example}

\begin{remark}
In the above example, in terms of the notation in \cite{HnzSch}, we have the compatible group $G=G_\dg$ acting on the closed subset $X:=\PP(W)$, with Cartan decomposition
$$
\ggo=\RR^2\oplus\glg_2(\RR)\oplus\RR, \qquad \pg=\RR^2\oplus\sym(2)\oplus\RR, \qquad \ag=\tg^5, \qquad A=T^0.
$$
The $A$ fixed points in $X$ are exactly $[E_{21}], [E_{31}], [E_{42}], [E_{43}]$, i.e.\ the weight vectors for the $A$-representation $W$, which have as $m$-images the matrices $F_{12}^3, F_{12}^4, F_{13}^5, F_{14}^5$, respectively.  It follows that $m(X)\cap\ag$ is not a union of convex hulls of subsets of $m$-images of $A$ fixed points in $X$, in spite $X$ is irreducible, as asserted in the first theorem in the introduction of \cite{HnzSch}.
\end{remark}

\subsection{An application in low dimension}

Any nilpotent Lie algebra of dimension $\leq 6$ has a positive derivation.  In dimension $7$, the first examples such that any derivation is nilpotent appear.  Note that these algebras do not admit any non-nilpotent solvable extension, they are called {\it characteristically nilpotent}.  An inspection of the classification of nilpotent Lie algebras of dimension $\leq 7$ (see e.g.\ \cite{Mgn} or \cite{Frn} and the references therein), which includes more than one hundred algebras and some continuous families, gives that among those which are not characteristically nilpotent only four do not admit a positive derivation.  We now apply the results obtained in this section to show that each of these four nilpotent Lie algebras has a solvable extension admitting a Ricci negative metric.

\begin{theorem}\label{dim7}
Any nilpotent Lie algebra of dimension $\leq 7$ which is not characteristically nilpotent admits a Ricci negative derivation.
\end{theorem}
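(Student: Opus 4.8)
The proof reduces to a finite list of explicit verifications. First note that if $\ngo$ admits a positive derivation $D>0$, then $D$ is Ricci negative by \eqref{Ric-half} and \eqref{mmR}; in particular every nilpotent Lie algebra of dimension $\le 6$ has a Ricci negative derivation, so we may assume $\dim\ngo=7$. By the discussion preceding the statement, an inspection of the classification (\cite{Mgn,Frn}) shows that among the $7$-dimensional nilpotent Lie algebras which are not characteristically nilpotent, all but exactly four admit a positive derivation and are therefore settled by the same remark. It thus remains to handle these four exceptional algebras one at a time.

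For each of the four the plan is as follows. Fix an explicit basis $\{e_1,\dots,e_7\}$ adapted to the structural constants $c(\lb)_{ij}^k$, determine the space of diagonal derivations $\dg=\Der(\ngo)\cap\tg^7$ by solving the linear system $\la D,F_{ij}^k\ra=0$ for $(i,j,k)\in I_\lb:=\{(i,j,k):c(\lb)_{ij}^k\ne 0\}$, and record the points $F_{ij}^k=m(\mu_{ijk})$, which by Lemma \ref{A-deg} all lie in $m\left(\overline{T\cdot\lb}\right)\cap\tg^7$, hence in $m\left(\overline{G_D\cdot\lb}\right)\cap\tg^7$ for every $D\in\dg$. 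One then exhibits a single $D\in\dg$ with $\tr{D}>0$ of the form $D=\sum_{(i,j,k)\in I_\lb} a_{ijk}F_{ij}^k+E$ with $a_{ijk}\ge 0$ and $E\in\tg^7_{>0}$; by Corollary \ref{main} (iii) such a $D$ is Ricci negative. Whenever the chosen basis is \emph{nice} in the sense of Definition \ref{nice-def}, this is exactly the assertion that the open cone of Corollary \ref{cor-nice} meets $\{D\in\dg:\tr{D}>0\}$, and the search for $(D,a_{ijk},E)$ becomes a transparent linear-feasibility problem, solved much as in Examples \ref{heis3} and \ref{n4nice}.

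What makes the four exceptional cases possible at all, despite the absence of a positive derivation, is precisely the freedom to add to a strictly positive diagonal matrix $E$ a nonnegative combination of the weights $F_{ij}^k$, each of which has a single positive entry and two negative ones: this combination can be tuned so that the sum lands in the lower-dimensional constrained subspace $\dg$ while the ``$\tg^7_{>0}$-part'' stays positive. When a given algebra admits no nice basis, or when the cone attached to a nice basis fails to meet $\dg$, one instead uses the full strength of Corollary \ref{main} by enlarging the acting group: $G_D$ always contains the diagonal torus $T$ and, for the algebras at hand, a product of blocks $\Gl_k^+(\RR)$, so that $\overline{G_D\cdot\lb}$ is a small explicit semialgebraic cone of brackets whose moment image $m\left(\overline{G_D\cdot\lb}\right)\cap\tg^7$ can be computed directly, exactly as in Examples \ref{n4nonice} and \ref{n5nonice}; by Theorem \ref{conv1} (ii) its intersection with a Weyl chamber of $G_D$ is a convex polytope, so the verification stays finite.

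The main obstacle is therefore not a single hard idea but the organization of the case analysis: one must correctly single out the four algebras in the classification, choose in each case a basis that is nice (or, failing that, carry out the $G_D$-orbit-closure computation), compute $\dg$ and the relevant moment-map polytope, and certify the existence of a Ricci negative $D$ — in particular checking that $\dg$ does contain elements of positive trace. I expect the most delicate instance to be the algebra whose torus $\dg$ of diagonal derivations has lowest dimension and sits most ``transversally'' to $\CH_\lb$, where one is forced to use genuine degenerations $\mu\in\overline{G_D\cdot\lb}$ with $G_D\supsetneq T$ so that off-diagonal contributions to $m$ enlarge $m\left(\overline{G_D\cdot\lb}\right)\cap\tg^7$ just enough to reach the constrained cone.
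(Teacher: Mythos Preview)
Your overall strategy is right and close to the paper's, but what you have written is a plan, not a proof: you never name the four algebras, never write down a candidate $D$, and never verify the decisive inequality $D-M\in\tg^7_{>0}$ in any case. More importantly, there is a genuine logical slip in the key step. You assert that any $D=\sum_{(i,j,k)\in I_\lb}a_{ijk}F_{ij}^k+E$ with $a_{ijk}\ge 0$ and $E\in\tg^7_{>0}$ is Ricci negative by Corollary \ref{main} (iii). But that corollary requires the combination $\sum a_{ijk}F_{ij}^k$ (after normalization) to lie in $m\left(\overline{G_D\cdot\lb}\right)\cap\tg^7$, and while each individual $F_{ij}^k$ does lie there by Lemma \ref{A-deg}, a convex combination need not: $m\left(\overline{G_D\cdot\lb}\right)\cap\tg^7$ is in general only a finite \emph{union} of polytopes (see \eqref{unionW} and Example \ref{n5nonice}), and equals $\CH_\lb$ only when the basis is nice (Lemma \ref{nice}). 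For the four exceptional algebras the natural bases are \emph{not} nice (e.g.\ $[e_2,e_3]=e_5+e_7$ in the first one), so your displayed implication is unjustified precisely where it matters.

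The paper does not fix this by enlarging $T$ to $G_D$ as you anticipate. Instead it stays with the torus and uses Lemma \ref{A-deg} in a sharper way: for two of the algebras it exhibits an $\alpha\in\tg^7$ picking out a proper face of $\CH_\lb$, so that $e^{t\alpha}\cdot\lb$ degenerates to a bracket $\lambda\in\overline{T\cdot\lb}$ which \emph{is} nice; then Lemma \ref{nice} gives $\CH_\lambda\subset m\left(\overline{T\cdot\lambda}\right)\cap\tg^7\subset m\left(\overline{T\cdot\lb}\right)\cap\tg^7$, and one checks $D-M>0$ for a suitable $M\in\CH_\lambda$. For the remaining two algebras a single vertex $F_{ij}^k$ already suffices (so no convexity is needed at all). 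Your proposed detour through block groups $G_D\supsetneq T$ and explicit orbit-closure computations as in Example \ref{n5nonice} is therefore unnecessary, and you give no evidence it would terminate cleanly; the missing idea is the ``degenerate to a nice face'' trick.
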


\begin{proof}
The four $7$-dimensional algebras mentioned above are defined by
\begin{align}
&[e_1, e_2] = e_4, \; [e_1, e_4] = e_5, \; [e_1, e_5] = e_6, \; [e_1, e_6] = e_7, \; [e_2, e_3] = e_5+e_7, \label{alg1} \\
&[e_3, e_4] =-e_6, \; [e_3, e_5] = -e_7, \quad D=\Diag(0, 1, 0, 1, 1, 1, 1). \notag \\
&[e_1, e_2] = e_4, \; [e_1, e_4] = e_5, \; [e_1, e_5] = e_6, \; [e_1, e_6] = e_7, \; [e_2, e_3] = e_6+e_7, \label{alg2} \\
&[e_3, e_4] =-e_7, \quad D=\Diag(0, 1, 0, 1, 1, 1, 1). \notag \\
&[e_1, e_2] = e_3, \; [e_1, e_3] = e_4, \; [e_1, e_5] = e_6, \; [e_2, e_3] = e_5, \; [e_2, e_4] = e_6, \label{alg3} \\
&[e_2, e_5] =e_7, [e_2, e_6] =e_7, \; [e_2, e_5] =-e_7,  \quad D=\Diag(0, 1, 1, 1, 2, 2, 3). \notag \\
&[e_1, e_2] = e_3, \; [e_1, e_3] = e_4, \; [e_1, e_4] = e_5, \; [e_1, e_6] = e_7, \; [e_2, e_3] = e_6, \label{alg4} \\
&[e_2, e_4] =e_7, [e_2, e_5] =e_7, \; [e_3, e_4] =-e_7,  \quad D=\Diag(0, 1, 1, 1, 1, 2, 2). \notag
\end{align}

We will prove that the derivation $D$ given in each case is Ricci negative, from which the theorem follows.  For the algebra \eqref{alg1} , we can use $\alpha:=\Diag(-1,0,-2,-1,-2,-3,-4)$ to show that $\CH(F_{ij}^k:c_{ij}^k\ne 0,\; F_{ij}^k\ne F_{23}^7)$ is a face of $\CH_{\lb}$ which corresponds to the degeneration $\lambda:=\lb-\mu_{237}\in \overline{T\cdot\lb}$ (see Lemma \ref{A-deg}).  Note that $\lambda$ is nice, and we have that $D-M>0$ for
$M:=\unm(F_{12}^4+F_{23}^5)\in \CH_\lambda$.  Thus $D$ is Ricci negative by Corollary \ref{cor-nice}.  The case \eqref{alg2} follows in identical way by setting $\alpha:=\Diag(-1,0,-3,-1,-2,-3,-4)$ and $M:=\unm(F_{12}^4+F_{23}^6)$.

For the algebra \eqref{alg3}, we use that $\mu_{156}\in \overline{T\cdot\lb}$ (see Lemma \ref{A-deg}), hence
$$
M:=F_{15}^6\in m\left(\overline{T\cdot\lb}\right)\cap\tg^7 \subset m\left(\overline{G_D\cdot\lb}\right)\cap\tg^7,
$$
and so $D$ is Ricci negative by Corollary \ref{main} since $D-M>0$.   Finally, in the case of \eqref{alg4} one can use $\mu_{167}$, concluding the proof.
\end{proof}

It is shown in Proposition \ref{ex1ex2ex5}, (i) that the above theorem already fails in dimension $8$.

\section{Ricci negative nilradicals}\label{RNnil}

In this section, we consider the following question:
\begin{quote}
(Q2)  Which nilpotent Lie algebras can be the nilradical of some solvable Lie algebra admitting a Ricci negative metric?
\end{quote}
We call such a Lie algebra a {\it Ricci negative nilradical} (RN-nilradical for short).  The name is motivated by Einstein nilradicals (see e.g.\ the survey \cite{cruzchica}).  The existence of a positive derivation (i.e.\ the real part of each eigenvalue is positive) is sufficient to be a RN-nilradical (see \cite[Theorem 2, (1)]{NklNkn}); in particular, any $2$-step nilpotent Lie algebra is a RN-nilradical.  Furthermore, it follows from Theorem \ref{dim7} that any non-characteristically nilpotent nilpotent Lie algebra of dimension $\leq 7$ is a RN-nilradical.

On the other hand, a first necessary condition on a nilpotent Lie algebra to be a RN-nilradical is the existence of a derivation with nonzero trace.  This follows from the fact that unimodular solvable Lie algebras do not admit Ricci negative metrics (see \cite{Dtt}).  In what follows, we recall some algebraic notions and facts related to such condition.

Let $\ngo$ be a real nilpotent Lie algebra.  Given $D\in\Der(\ngo)$, consider the additive Jordan decomposition for $D$ given by
$$
D=D^s+D^n, \qquad [D^s,D^n]=0, \qquad D^s=D^{\RR}+D^{\im\RR}, \qquad [D^{\RR},D^{\im\RR}]=0,
$$
where $D^s$ is semisimple (i.e.\ diagonalizable over $\CC$), $D^n$ is nilpotent, $D^{\RR}$ is {\it real semisimple} (i.e.\ diagonalizable over $\RR$) and $D^{\im\RR}$ is semisimple with only imaginary eigenvalues.  It is well-known that $D^s,D^n,D^{\RR},D^{\im\RR}\in\Der(\ngo)$.  Note that $\Spec(D)=\Spec(D^s)$ and $\Rea\Spec(D)=\Spec(D^{\RR})$.

A maximal abelian subspace of real semisimple derivations is called a {\it maximal torus} and is known to be unique up to conjugation by automorphisms; its dimension is called the {\it rank} of $\ngo$ and will be denoted by $\rank(\ngo)$.  Recall that a Lie algebra is said to be {\it characteristically nilpotent} if it has only nilpotent derivations (i.e.\ its complex rank is zero).  The first example was found in \cite{DxmLst} sixty years ago.

It is proved in \cite{Nkl2} that for any Lie algebra $\ngo$, there exists a real semisimple $\phi_\ngo\in\Der(\ngo)$ such that $\tr{\phi_\ngo D}=\tr{D}$ for all $D\in\Der(\ngo)$.  Such special derivation, which is unique up to automorphism conjugation, is called a {\it pre-Einstein} derivation.  Note that $\phi_\ngo=0$ if and only if $\tr{D}=0$ for any $D\in\Der(\ngo)$, so $\phi_\ngo\ne 0$ is a first obstruction for $\ngo$ to be a RN-nilradical.

It clearly follows that
\begin{quote}
  $\ngo$ characteristically nilpotent $\quad\Rightarrow\quad$ $\rank(\ngo)=0$ $\quad\Rightarrow\quad$ $\phi_\ngo=0$.
\end{quote}
An obvious natural question is whether the converse assertions hold.  Curiously enough, we could not find any answer in the literature. Examples \ref{ex10} and \ref{ex3} below show that the converse assertions are both false.

The Lie algebras under consideration usually have a natural diagonal derivation $D$. To study the other derivations, we will consider other real semisimple derivations $D^\prime$ which commute with the given derivation $D$. By taking quotients by invariant ideals and using the low dimensional classifications in \cite{Mgn}, which contains the full description of derivations, we find information about the general derivation and about the rank of the Lie algebra. When we define a derivation $D: \ngo \to \ngo$, we will sometimes only define its value on generators of the Lie algebra $\ngo$. In these cases, it is left to the reader to check that it indeed defines a derivation on the whole Lie algebra $\ngo$.

The existence of a nice basis on a nilpotent Lie algebra $\ngo$ (see Definition \ref{nice-def}) makes the computations concerning derivations more manageable.  Indeed, if $D^\prime \in\Der(\ngo)$, then the linear map defined by the diagonal of the matrix of $D^\prime$ with respect to a nice basis is also a derivation (see Corollary \ref{niceD}). Since the given derivation $D$ is already diagonal and $D^\prime$ commutes with $D$, the diagonal of $D^\prime$ will also commute with $D$. In some cases we will conclude that the diagonal of $D^\prime$ is equal to $\lambda D$ for some $\lambda \in \mathbb{R}$ and since $D^\prime$ is real semisimple, this will imply that $D^\prime$ is equal to its diagonal.

We note that all the examples provided in this section are written in terms of a nice basis with the only exception of Proposition \ref{ex1ex2ex5}, (i).

\begin{example}\label{ex10}
Consider the Lie algebra $\ngo$ with basis vectors $X_1, \dots, X_5, Y_1, \ldots, Y_5, Z$ and brackets defined as
\begin{align*}
[X_1, X_2] &= X_3 &[X_1, X_3] &= X_4 &[X_1, X_4] &= X_5 &[X_2,X_3] &= X_5\\
[Y_1, Y_2] &= -X_3 &[Y_1, Y_3] &= -X_4 &[Y_1, Y_4] &= -X_5 &[Y_2,Y_3] &= -X_5\\
[X_1, Y_2] &= Y_3 &[X_1, Y_3] &= Y_4 &[X_1, Y_4] &= Y_5 &[X_2,Y_3] &= Y_5\\
[Y_1, X_2] &= Y_3 &[Y_1, X_3] &= Y_4 &[Y_1, X_4] &= Y_5 &[Y_2,X_3] &= Y_5\\
[X_1, Y_1] &= Z &[X_2, Y_2] &= Z. & & & &
\end{align*}
It is straightforward to check that the Jacobi identity holds. Let $\Diag(d_1, \ldots, d_{11})$ be a diagonal derivation. The first four brackets show that $d_i = i d_1$ for $1 \leq i \leq 5$. From the brackets which lead to $Y_3$, we find that $d_1 + d_7 = d_6 + d_2$. The last two brackets leading to $Z$ imply that $d_1 + d_6 = d_2 + d_7$, from which we conclude that $d_1 = 0$. The other brackets then easily give that $\Diag(d_1, \ldots, d_{11}) = 0$.

However, since $D$ defined by $D(X_i) = i Y_i, D(Y_i) = -i X_i$ for all $1 \leq i \leq 5$ and $D(Z) = 0$ is a derivation, we conclude that $\ngo$ is not characteristically nilpotent.  Note that the basis is nice, so the diagonal of every derivation is again a derivation.  If $D^\prime$ is any real semisimple derivation which commutes with $D$, then its diagonal is equal to $0$ and hence it is of the form $D^\prime(X_i) = \lambda_i Y_i, D^\prime(Y_i) = \lambda_i X_i$ for $1 \leq i \leq 5$ and $D^\prime(Z) = \mu Z$. A similar computation as above shows that $\lambda_i = i \lambda_1$ and $\mu = 0$, which imlies that $D^\prime = \lambda_1 D$. We conclude that $\ngo$ is of complex rank $1$.

To see that $\rank(\ngo)=0$, take any diagonalizable derivation $D^\prime: \ngo \to \ngo$. Since the basis is nice, the diagonal of $D^\prime$ is again equal to $0$. Let $D^\prime(X_i) = \lambda_i Y_i + U_i$ and $D^\prime(Y_i) = \mu_i X_i + V_i$ for $i = 1,2$ and with $U_i, V_i$ a linear combination of the other basis vectors. Let $\mg = \langle X_4, X_5, Y_4, Y_5, Z \rangle$, which is invariant under $D^\prime$ as the sum of $Z(\ngo)$ and $[[\ngo,\ngo],\ngo]$. The relations leading to $X_3$ then show that
\begin{align*}
D^\prime(X_3) &= (\lambda_1 + \lambda_2) Y_3 + \mg = - (\mu_1 + \mu_2) Y_3 + \mg \\
D^\prime(Y_3) &= (- \lambda_1 + \mu_2) X_3 + \mg = (\mu_1 - \lambda_2) X_3 + \mg
\end{align*}
and hence $\lambda_i = - \mu_i$ for $i = 1, 2$. Since $D^\prime$ is diagonalizable over $\mathbb{R}$, this implies that $\lambda_1 = \lambda_2 = 0$. As the rank of $\ngo$ over $\mathbb{C}$ is equal to $1$, the derivation $D^\prime$ is conjugate over $\mathbb{C}$ to a multiple of $D$, which implies that in fact $D^\prime = 0$. We conclude that $\ngo$ has real rank $0$ but complex rank $1$.

\end{example}

The above is therefore an example of a nilpotent Lie algebra which is neither characteristically nilpotent nor a RN-nilradical.  The following example shows that the existence of a nonzero diagonalizable derivation is not sufficient to be a RN-nilradical either.

\begin{example}\label{ex3}
Let $\ngo$ be the Lie algebra of dimension $11$ with basis $X_1, \ldots, X_5, Y_1, \ldots Y_5, Z$ and bracket
\begin{align*}
[X_1, X_2] &= X_3  &[X_1, X_3] &= X_4 &[X_1, X_4] &= X_5 &[X_2, X_3] &= X_5 \\
[Y_1, Y_2] &= Y_3  &[Y_1, Y_3] &= Y_4 &[Y_1, Y_4] &= Y_5 &[Y_2, Y_3] &= Y_5 \\
[X_1, Y_1] &= Z    &[X_2, Y_2] &= Z. & & & &
\end{align*}
Consider the derivation $D$ given by $D(X_i) = i X_i$, $D(Y_j) = - j Y_j$ and $D(Z) = 0$ and let $D^\prime$ be a real semisimple derivation which commutes with $D$. Then $D^\prime$ is diagonal in this basis, since $D$ has $11$ different eigenvalues. From the first four brackets between the $X_i$ it follows that $D^\prime(X_i) = i \lambda X_i$ for some $\lambda \in \mathbb{R}$. Similarly for the $Y_j$ we find that $D^\prime(Y_j) = j \mu Y_j$ for some $\mu \in \mathbb{R}$. The last two brackets, leading to the vector $Z$, imply that $\lambda + \mu = 2 \lambda + 2 \mu$ or equivalently that $D^\prime = \lambda D$. We conclude that $\rank(\ngo)=1$ and however, every derivation has trace $0$, i.e.\ $\phi_\ngo=0$.
\end{example}

All this suggests a reformulation of question (Q2) by adding the condition $\phi_\ngo\ne 0$ on the nilpotent Lie algebras involved.  The only other known necessary condition for being a RN-nilradical was obtained in \cite{NklNkn}: there must exist a derivation $D$ with $\tr{D}>0$ whose restriction to the center $\zg(\ngo)$ of $\ngo$ is positive, in the sense that $D^{\RR}|_{\zg(\ngo)}>0$, or equivalently, the eigenvalues of $D|_{\zg(\ngo)}$ have all positive real part (see \cite[Theorem 2, (1)]{NklNkn}).  Using this obstruction, we now prove that $\phi_\ngo\ne 0$ is still not sufficient.  More precisely, the following proposition shows that the two sufficient conditions to be a RN-nilradical mentioned above (i.e.\ $2$-step and $\dim{\ngo}\leq 7$, $\ngo$ non-characteristically nilpotent) are actually sharp.  Furthermore, we found a curve of nilpotent Lie algebras which are not RN-nilradicals.  Recall that any real semisimple derivation belongs to some maximal torus, hence it is always conjugate to some derivation in a given maximal torus.

\begin{proposition}\label{ex1ex2ex5}
There exist nilpotent Lie algebras such that $\phi_\ngo\ne 0$ but any real semisimple derivation has a zero eigenvalue on the center and with the following properties:
\begin{itemize}
\item[(i)] $\dim{\ngo}=8$, $\ngo$ is $5$-step nilpotent, the dimensions of the descendent central series are $(8,5,4,3,1)$, $\dim{\zg(\ngo)}=2$, $\rank(\ngo)=1$ and $\Diag(0,1,0,1,1,1,1,0)\in\Der(\ngo)$.

\item[(ii)] $\dim{\ngo}=10$, $\ngo$ is $3$-step nilpotent with descendent central series dimensions $(10,6,2)$, $\dim{\zg(\ngo)}=3$, $\rank(\ngo)=2$ and $$\Diag(0,0,0,1,1,1,0,0,0,0), \; \Diag(0,0,0,0,0,0,1,1,1,1)\in\Der(\ngo).$$

\item[(iii)] A continuous family of pairwise non-isomorphic $13$-dimensional $6$-step nilpotent Lie algebras such that $\dim{\zg(\ngo)}=3$, $\rank(\ngo)=1$ and
    $$
    \Diag(1,2,3,4,5,6,7,-1,-2,-3,-4,-5,0)\in\Der(\ngo).
    $$
\end{itemize}
\end{proposition}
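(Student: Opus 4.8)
The plan is to exhibit, for each of the three cases, an explicit nilpotent Lie algebra given by structure constants in a nice basis, verify the Jacobi identity, and then carry out a derivation analysis exactly in the style of Examples \ref{ex10} and \ref{ex3}. For each algebra one must establish four things: (a) the stated diagonal derivation $D$ is indeed a derivation (routine, left to the reader in the nice-basis setting); (b) every real semisimple derivation is, up to automorphism conjugation, diagonal in the given basis and in fact a multiple of $D$ (resp.\ lies in the stated low-dimensional torus), which pins down $\rank(\ngo)$; (c) the pre-Einstein derivation $\phi_\ngo$ is nonzero, which follows as soon as $\tr D\ne 0$ for the given $D$; and (d) every real semisimple derivation has a zero eigenvalue on the center $\zg(\ngo)$, which by \cite[Theorem 2, (1)]{NklNkn} (equivalently condition \eqref{NNnec}) shows $\ngo$ is not a RN-nilradical. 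The structural data in the statement (step length, dimensions of the descending central series, $\dim\zg(\ngo)$) are then read off directly from the brackets.

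For step (b) the strategy is the one outlined in the paragraph preceding Example \ref{ex10}: since the basis is nice, Corollary \ref{niceD} tells us that the diagonal part of any derivation is again a derivation; a derivation $D'$ can be assumed (after conjugation) to commute with $D$, hence its diagonal part commutes with $D$, and then the bracket relations force the diagonal part to be a scalar multiple of $D$ (case (i),(iii)) or to lie in the span of the two listed diagonal derivations (case (ii)). One then argues that the off-diagonal part must vanish: in case (iii), $D$ has $13$ distinct eigenvalues so any $D'$ commuting with it is already diagonal; in cases (i) and (ii) one repeats the eigenvalue-separation and invariant-ideal argument of Example \ref{ex10} — restrict $D'$ modulo the terms of the descending central series and the center, compare the two expressions for $D'$ on a doubly-generated bracket, and use real-diagonalizability of $D'$ to kill the remaining freedom. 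This simultaneously yields $\rank(\ngo)$ and the fact that the real semisimple part of every derivation is conjugate into $\RR D$ (resp.\ the rank-$2$ torus).

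For step (d), once we know every real semisimple derivation is (up to conjugation) of the form $\lambda D$ (case (i),(iii)) or $\lambda D_1+\mu D_2$ with $D_1,D_2$ the listed derivations (case (ii)), we simply inspect the eigenvalues on $\zg(\ngo)$: in each case the listed diagonal derivation(s) are designed to have a $0$ in a coordinate spanning a central basis vector (e.g.\ the last entry, corresponding to $Z$, is $0$ for the derivations in (i) and (iii), and in (ii) the last four entries of $D_1$ and the three entries $7,8,9$-analog of $D_2$ are arranged so that no positive combination is positive on all of $\zg(\ngo)$). Hence no real semisimple derivation can be positive on the center, so \eqref{NNnec} fails and $\ngo$ is not a RN-nilradical, while $\phi_\ngo\ne 0$ because $\tr D>0$ after an appropriate relabeling. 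For part (i) the remark in the text that the example is \emph{not} written in a nice basis means Corollary \ref{niceD} is unavailable there, so the derivation analysis must instead be done by hand, directly solving the linear system imposed by $E\cdot\mu=0$ on a general matrix $E$; this is the only place where the computation genuinely requires tracking off-diagonal entries.

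The main obstacle is step (b) in the non-nice case (i): without the nice-basis shortcut one cannot reduce to diagonal derivations for free, so one has to show by an explicit, somewhat lengthy linear-algebra argument that $\Der(\ngo)$ modulo nilpotent derivations is one-dimensional and that the real semisimple part has the claimed vanishing on $\zg(\ngo)$. A secondary difficulty is simply verifying the Jacobi identity and the non-isomorphism of the members of the $13$-dimensional family in (iii) — the latter presumably follows from an invariant such as the isomorphism type of a distinguished quotient or the action of $D$ on a canonically-defined subquotient, which distinguishes the parameter. Everything else is bookkeeping with structure constants of the type already illustrated in Examples \ref{ex10} and \ref{ex3}.
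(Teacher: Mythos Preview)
Your overall plan is correct and matches the paper's strategy closely: exhibit explicit algebras, verify the listed diagonal $D$ is a derivation, pin down the maximal torus via the commuting/diagonal-part argument using Corollary \ref{niceD} where available, and read off the zero eigenvalue on the center. You also correctly spotted that part (i) is the unique non-nice example, and that (iii) requires a separate non-isomorphism argument.

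Two places where the paper proceeds differently from your sketch are worth noting. For part (i) you propose solving the full $64$-variable linear system $E\cdot\mu=0$ by hand; the paper avoids this by observing that $\langle Y\rangle$ is a characteristic ideal (it is $[\ngo,\ngo]\cap\ker D$) and that the quotient $\ngo/\langle Y\rangle$ is exactly the algebra $\mathcal{G}_{7,1.01(ii)}$ of \cite{Mgn}, already known to have rank~$1$. Any real semisimple $D'$ commuting with $D$ then induces a multiple of $\bar D$ on the quotient, forcing $D'(X_1),D'(X_3)\in\langle Y\rangle$; computing $D'(Y)=D'([X_1,X_3])$ gives $D'(Y)=0$, and semisimplicity of $D'$ then kills $D'(X_1)$ and $D'(X_3)$. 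This quotient trick replaces your brute-force computation entirely. For the non-isomorphism in part (iii), your ``some invariant of a canonical subquotient'' is made concrete in the paper: one builds characteristic subspaces $U=[C(\gamma_4),C(\gamma_4)]$, $V=C(U)$, $W=C(\gamma_5)\cap V$, checks they are preserved by any isomorphism $\varphi:\ngo_s\to\ngo_t$, and then tracks $\varphi(X_i)$ modulo $\gamma_i$ to obtain $\varphi(X_i)\equiv\lambda^i X_i'$; comparing $\varphi([X_2,X_5])$ with $[\varphi(X_2),\varphi(X_5)]$ yields $s\lambda^7=t\lambda^7$ with $\lambda\ne 0$, hence $s=t$. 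Your proposal would work in principle, but these two shortcuts are what make the paper's argument short.
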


\begin{proof} Part (i).  Consider the Lie algebra $\ngo$ of dimension $8$ with basis $X_1, \ldots, X_7, Y$ and bracket
\begin{align*}
[X_1, X_2] &= X_4 &[X_1, X_4] &= X_5 &[X_1, X_5] &= X_6	 & [X_1, X_6] &= X_7\\
[X_2, X_3] &= X_6 + X_7  &[X_3, X_4] &= -X_7  &[X_1,X_3] &= Y.
\end{align*}
The center is $\zg(\ngo)=\la X_7,Y\ra$.  Note that the Lie algebra $\ngo_X = \faktor{\ngo}{\langle Y \rangle}$ has rank $1$, as it is equal to the Lie algebra $\mathcal{G}_{7,1.01(ii)}$ of \cite{Mgn}. Consider the derivation $D: \ngo \to \ngo$ given by $D(X_1) = D(X_3) = 0$ and $D(X_2) = X_2$. Every derivation $D^\prime$ which commutes with $D$ satisfies $D^\prime(Y) = \mu Y$, since $\langle Y \rangle$ is equal to the intersection of $[\ngo,\ngo]$ and the eigenspace of $D$ of eigenvalue $0$.  So, by considering the map induced by $D^\prime$ on $\faktor{\ngo}{\langle Y \rangle}$ one sees that  $
D^\prime(X_1), D^\prime(X_3) \in \langle Y \rangle$.
In particular, \begin{align*} D^\prime(Y) = [D^\prime(X_1),X_3] + [X_1,D^\prime(X_3)] = 0. \end{align*} If $D^\prime$ is real semisimple, then $D^\prime(X_1) = D^\prime(X_3) = 0$ and hence $D^\prime = \lambda D$ for some $\lambda \in \mathbb{R}$. So every real semisimple derivation has eigenvalue $0$ on the center and the proposition follows.

\vspace{.3cm}
\noindent
Part (ii).  Let $\ngo$ be the Lie algebra with basis $X_1, X_2, X_3, Y_1, Y_2, Y_3, Z_1, Z_2, Z_3, Z_4$ and bracket
\begin{align*}
[X_1, Y_1] &= Y_2 &[X_1, Y_2] &= Y_3 &[X_2, Y_1] &= Y_3\\
[X_1, Z_1] &= Z_2 &[X_2, Z_1] &= Z_3 &[X_1, Z_2] &= Z_4\\
[X_2, Z_3] &= Z_4 &[X_1, X_2] &= X_3. & &
\end{align*}
The center is generated by $X_3,Y_3,Z_4$.  Let $D$ be the derivation given by $D(X_i) = 0$, $D(Y_i) = Y_i$, $D(Z_j) = 2 Z_j$ for all $i \in \{1,2,3\}, j \in \{1,2,3,4\}$. If $D^\prime$ is a real semisimple derivation commuting with $D$, then $D^\prime(X_3) = \lambda_3 X_3$ for some $\lambda_3 \in \mathbb{R}$ since $X_3$ spans the intersection of the eigenspace of $D$ for eigenvalue $0$ and $[\ngo,\ngo]$. We will demonstrate that $\lambda_3 = 0$, which implies the proposition.

Note that, since the basis is nice, we can assume that $D^\prime$ is a diagonal derivation. Write $D(X_i) = \lambda_i X_i$, $D(Y_1) = \mu Y_1$ and $D(Z_1) = \nu Z_1$. By applying $D^\prime$ to the second and the third equation, we get $$2 \lambda_1 + \mu  = \lambda_2 + \mu.$$ Similarly, by applying $D^\prime$ to the sixth and seventh equation we get $$\nu + 2 \lambda_1 = \nu + 2 \lambda_2.$$ Hence $\lambda_1=\lambda_2 = 0$ and therefore also $\lambda_3 = \lambda_1 + \lambda_2 = 0$. The other parts follow immediately.

\vspace{.3cm}
\noindent
Part (iii).  Finally, consider the Lie algebra $\ngo_t$ with basis $X_1, \ldots, X_7, Y_1, \ldots, Y_5, Z$ and bracket
\begin{align*}	
[X_1, X_2] &= X_3 &[X_1, X_3] &= X_4 &[X_1, X_4] &= X_5 \\
[X_1, X_5] &= X_6 &[X_1, X_6] &= X_7 &[X_2, X_3] &= X_5 \\
[X_2, X_4] &= X_6 &[X_2, X_5] &= t X_7 &[X_3, X_4] &= (1-t) X_7\\
[Y_1, Y_2] &= Y_3 &[Y_1, Y_3] &= Y_4 & &  \\
[Y_1, Y_4] &= Y_5 &[Y_2, Y_3] &= Y_5 & & \\
[X_1, Y_1] &= Z &[X_2, Y_2] &= Z.
\end{align*}
The center is $\zg(\ngo)=\la X_7,Y_5,Z\ra$.  Let $D$ be the derivation given by $D(X_i) = i X_i$, $D(Y_j) = - j Y_j$ and $D(Z) = 0 $. Similarly as in Example \ref{ex3} one can show that this Lie algebra has rank $1$, using that both the Lie algebras $\ngo_X$ and $\ngo_Y$ have rank one, where $\ngo_X$ and $\ngo_Y$ are the subalgebras spanned by the vectors $X_i$ and $Y_j$ respectively. Hence every real semisimple derivation is conjugate to $\lambda D$ for $\lambda \in \mathbb{R}$ and will have an eigenvalue $0$ on the center.

Now we show that the Lie algebras of (iii) are pairwise non-isomorphic and thus give us a one-parameter family of examples.  We denote $\gamma_2(\ngo):=[\ngo,\ngo]$, $\gamma_3(\ngo):=[\ngo,[\ngo,\ngo]]$ and so on.  Note that $\gamma_4(\ngo_t) = \langle X_5, X_6, X_7, Y_5 \rangle$ and thus the centralizer is given by
$$
C(\gamma_4(\ngo_t)) = \langle X_3, X_4, X_5, X_6, X_7, Y_1, Y_2, Y_3, Y_4, Y_5, Z \rangle.
$$
Now define the subspaces
\begin{align*}
U &:= [ C(\gamma_4(\ngo_t)), C(\gamma_4(\ngo_t))] = \langle X_7, Y_3, Y_4, Y_5 \rangle, \\
V &:= C(U) = \langle X_1, \ldots, X_7, Y_3, Y_4, Y_5, Z \rangle.
\end{align*}
Similarly, $\gamma_5(\ngo_t) = \langle X_6, X_7 \rangle$ and
$$W := C(\gamma_5(\ngo_t)) \cap V = \langle X_2, \ldots, X_7, Y_3, Y_4, Y_5, Z \rangle.$$

Let $\varphi: \ngo_s \to \ngo_t$ be an isomorphism and $V, W \subseteq \ngo_s$, $V^\prime, W^\prime \subseteq \ngo_t$ the subspaces as constructed above. These subspaces are characteristic, in the sense that $\varphi(V) = V^\prime$ and $\varphi(W) = W^\prime$. Let $\lambda, \mu, a \in \mathbb{R}$ such that $\varphi(X_1) = \lambda X_1^\prime + a X^\prime_2 + \gamma_2(\ngo_t)$ and $\varphi(X_2) = \mu X_2^\prime + \gamma_2(\ngo_t)$. A computation shows that $\mu = \lambda^2$ and that $$\varphi(X_i) = \lambda^i X_i^\prime + \gamma_{i}(\ngo_t)$$ for all $i \geq 2$. So in particular, we get that $$s \lambda^7 X_7^\prime =  \varphi(s X_7) = \varphi([X_2,X_5]) = [\varphi(X_2),\varphi(X_5)] = \lambda^7 [X_2^\prime,X_5^\prime] = t \lambda^7 X_7^\prime.$$
Because $\lambda \neq 0$ the claim follows.
\end{proof}

In view of the above proposition, besides $\phi_\ngo\ne 0$, we may add to question (Q2) the existence of a non-singular derivation.  The following proposition shows that this does not suffice either.

\begin{proposition}\label{ex8ex7}
There exist nilpotent Lie algebras with $\phi_\ngo\ne 0$ and the following properties:
\begin{itemize}
\item[(i)] $\dim{\ngo}=13$, $\ngo$ is $5$-step nilpotent, $\dim{\zg(\ngo)}=3$, $\rank(\ngo)=1$ and
$$
 D=\Diag(1,2,3,4,5,6,7,-1,-2,-3,-4,-5,1)\in\Der(\ngo), \qquad D|_{\zg(\ngo)}=\Diag(7,-5,1).
$$
\item[(ii)] $\dim{\ngo}=17$, $\ngo$ is $5$-step nilpotent, $\dim{\zg(\ngo)}=4$, $\rank(\ngo)=1$ and
$$
\begin{array}{c}
 D=\Diag(-1,-2,-3,-4,-5,-6,-7,1,2,3,4,5,6,7,-1,2,1)\in\Der(\ngo), \\ \\
 D|_{\zg(\ngo)}=\Diag(-7,7,-1,1), \qquad \tr{D}=2.
\end{array}
$$
\end{itemize}
\end{proposition}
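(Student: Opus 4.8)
The plan is to exhibit, for each of (i) and (ii), an explicit nilpotent Lie algebra $\ngo$ presented by a \emph{nice} basis (see Definition \ref{nice-def}), built in the same spirit as Examples \ref{ex10} and \ref{ex3} and Proposition \ref{ex1ex2ex5}. Concretely, one glues two filiform-type nilpotent ``halves'', spanned by generators $X_1,\dots,X_7$ and $Y_1,\dots$ (with chains short enough to make $\ngo$ exactly $5$-step), by means of a few extra brackets of the form $[X_a,Y_b]$ whose values span the remaining generators. The eigenvalues assigned to $D$ on all these generators are chosen so that every defining bracket is weight-homogeneous (which makes $D$ automatically a derivation), so that $D$ is non-singular, and --- the decisive design feature --- so that $D|_{\zg(\ngo)}$ already has eigenvalues of both signs, equal to $\Diag(7,-5,1)$ in case (i) and $\Diag(-7,7,-1,1)$ in case (ii). Once $\rank(\ngo)=1$ is established, this immediately explains why such an $\ngo$ cannot be a RN-nilradical: every real semisimple derivation is conjugate to some $\lambda D$, hence restricts to $\zg(\ngo)$ with eigenvalues $\lambda\cdot\{7,-5,1\}$ (resp.\ $\lambda\cdot\{-7,7,-1,1\}$), never positive for $\lambda\neq 0$; so by \cite[Theorem 2, (1)]{NklNkn} no solvable Lie algebra with nilradical $\ngo$ admits a Ricci negative metric, even though $D$ is a non-singular derivation and $\phi_\ngo\neq 0$.

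Granting the presentations, most of the verification is routine: checking the Jacobi identity on the listed brackets; counting the basis to get $\dim\ngo=13$ (resp.\ $17$); computing the lower central series $\gamma_2(\ngo)\supseteq\gamma_3(\ngo)\supseteq\cdots$ to confirm $5$-step nilpotency; solving $[x,\ngo]=0$ to identify $\zg(\ngo)$, read off $\dim\zg(\ngo)=3$ (resp.\ $4$), and read off the eigenvalues of $D|_{\zg(\ngo)}$ from the weights of the central generators; and observing that $D\in\Der(\ngo)$ because each defining relation is weight-homogeneous. Moreover $\phi_\ngo\neq 0$ follows at once once $D\in\Der(\ngo)$ is in hand, since $\tr D$ equals $14$ in case (i) and $2$ in case (ii), and $\phi_\ngo=0$ would force $\tr D'=0$ for every $D'\in\Der(\ngo)$.

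The substantive point is $\rank(\ngo)=1$. Since $D$ is real semisimple and all maximal tori of $\Der(\ngo)$ are conjugate by automorphisms, it suffices to show that any real semisimple $D'\in\Der(\ngo)$ with $[D',D]=0$ is a scalar multiple of $D$. As the basis is nice, Corollary \ref{niceD} gives $\Diagg(D')\in\Der(\ngo)$; comparing eigenspaces of $D$ and using the real semisimplicity of $D'$ reduces matters to $D'$ being diagonal in the chosen basis. A diagonal derivation preserves the subalgebra $\ngo_X$ spanned by the $X$'s and the subalgebra $\ngo_Y$ spanned by the $Y$'s; these are low-dimensional nilpotent Lie algebras of rank one listed in \cite{Mgn}, so $D'$ agrees with a fixed multiple $\lambda D$ on the $X$-generators and with a fixed multiple $\mu D$ on the $Y$-generators. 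Finally, evaluating $D'$ on the two (or more) glueing brackets that produce the central generators yields a linear relation forcing $\mu=\lambda$ --- exactly the mechanism by which the pair of brackets leading to $Z$ pins things down in Example \ref{ex3} --- whence $D'=\lambda D$ and $\rank(\ngo)=1$.

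I expect the genuine difficulty to be the design of the two presentations, not any single computation: one has to choose the $X$-chain, the $Y$-chain and the glueing brackets so that, simultaneously, the only diagonal derivations commuting with $D$ are its scalar multiples (so the rank is exactly $1$, not larger), the lower central series has length precisely $5$, $\dim\zg(\ngo)$ is exactly $3$ (resp.\ $4$) with $D|_{\zg(\ngo)}$ of mixed sign, and $D$ remains non-singular. Balancing these competing constraints, and matching the resulting quotients and subalgebras against the classification in \cite{Mgn}, is where the real work lies; the remaining verifications are bookkeeping entirely parallel to Proposition \ref{ex1ex2ex5}.
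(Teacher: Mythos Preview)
Your proposal is essentially the paper's own approach: build $\ngo$ by gluing two rank-one nilpotent blocks $\ngo_X$ and $\ngo_Y$ (identified via \cite{Mgn}) with a few mixed brackets producing the extra central generators, verify the routine structural facts, and prove $\rank(\ngo)=1$ by showing that any real semisimple $D'$ commuting with $D$ must act as $\lambda D$ on the $X$-block and $\mu D$ on the $Y$-block, with the gluing brackets forcing $\lambda=\mu$. The only point where the paper's execution differs slightly from your sketch is that $D$ has repeated eigenvalues (e.g.\ $X_1$ and $Z$ both have eigenvalue $1$ in (i)), so ``comparing eigenspaces of $D$'' does not by itself make $D'$ diagonal; the paper handles this not by reducing to diagonal $D'$ but by observing that intersections of $D$-eigenspaces with characteristic ideals such as $[\ngo,\ngo]$ are $D'$-invariant, and then passing to the rank-one quotients $\ngo/(\ngo_Y\oplus\ngo_Z)\cong\ngo_X$ and $\ngo/(\ngo_X\oplus\ngo_Z)\cong\ngo_Y$---your nice-basis reduction works too, but you will need the same invariance observation to finish it.
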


\begin{proof}
Part (i).  Let $\ngo$ be the Lie algebra with nice basis $X_1, \ldots, X_{7}, Y_1, \ldots, Y_5, Z$ and bracket
		\begin{align*}
			[X_1, X_3] &= X_4  &[X_1, X_4] &= X_5 &[X_1, X_5] &= X_6 \\
			[X_1, X_6] &= X_7 &[X_2, X_3] &= X_5 &[X_2, X_4] &= X_6 \\
			[X_3, X_4] &= -X_7 &[X_2, X_5] &= X_7 & & \\
			[Y_1, Y_2] &= Y_3 &[Y_1, Y_3] &= Y_4  &[Y_1, Y_4] &= Y_5\\
			[Y_2, Y_3] &= Y_5 &[X_2, Y_1] &= Z &[X_3, Y_2] &= Z.
		\end{align*}
The center is $\zg(\ngo)=\la X_7,Y_5,Z\ra$. 		Let $\ngo_X$, $\ngo_Y$ and $\ngo_Z$ be the vector spaces spanned by the vectors $X_i$, $Y_j$ and $Z$ respectively. These are all Lie subalgebras of $\ngo$ of rank $1$, which follows from \cite{Mgn} or from an explicit computation.  Consider the invertible derivation $D$ defined as $D(X_i) = i X_i, D(Y_j) = -j Y_j$ and $D(Z) = Z$ and let $D^\prime$ be any real semisimple derivation commuting with $D$. The subalgebra $\ngo_Z$ is invariant under $D^\prime$ since it is the intersection of $[\ngo,\ngo]$ and the eigenspace of $D$ of eigenvalue $1$ and similarly,
$$
D^\prime(\ngo_X \oplus \ngo_Z) \subseteq \ngo_X \oplus \ngo_Z, \qquad
D^\prime(\ngo_Y) \subseteq \ngo_Y.
$$		
%
Consider now the induced map by $D^\prime$ on $\faktor{\ngo} {\ngo_Y \oplus \ngo_Z } \approx \ngo_X$ and $\faktor{\ngo}{ \ngo_X \oplus \ngo_Z } \approx \ngo_Y$. Since these quotients have rank one, we find that $D^\prime(X_i) \in  \lambda i X_i + \ngo_Z$ and $D^\prime(Y_j) \in  - \mu j Y_j + \ngo_Z$ for some $\lambda, \mu \in \mathbb{R}$ and every $1 \leq i \leq 7, 1 \leq j \leq 5$. Now
\begin{align*}
D^\prime(Z) &= [D^\prime(X_3), Y_2] + [X_3,D^\prime(Y_2)] = 3 \lambda Z  - 2 \mu Z \\	
& = [D^\prime(X_2), Y_1] + [X_2,D^\prime(Y_1)] = 2 \lambda Z - \mu Z,
\end{align*} and hence $\lambda = \mu$. Moreover, $X_1$ and $Z$ are eigenvectors of $D'$ for the eigenvalues $\lambda$, so $D^\prime = \lambda D$ since it is real semisimple.

\vspace{.3cm}
\noindent
Part (ii).  Define the Lie algebra $\ngo$ with nice basis $X_1, \ldots, X_7, Y_1, \ldots, Y_7, Z_1, Z_2, Z_3$ and bracket
								\begin{align*}
				[X_1, X_3] &= X_4 &[X_1, X_4] &= X_5 &[X_1, X_5] &= X_6\\
				[X_1, X_6] &= X_7 &[X_2, X_3] &= X_5 &[X_2, X_4] &= X_6\\
				[X_3, X_4] &= -X_7 &[X_2, X_5] &= X_7 & & \\
				[Y_1, Y_3] &= Y_4 &[Y_1, Y_4] &= Y_5 &[Y_1, Y_5] &= Y_6\\
				[Y_1, Y_6] &= Y_7 &[Y_2, Y_3] &= Y_5 &[Y_2, Y_4] &= Y_6\\
				[Y_3, Y_4] &= -Y_7 &[Y_2, Y_5] &= Y_7 & & \\ 				
				[X_3, Y_2] &= Z_1 &[X_2, Y_1] &= Z_1 & &\\
				[Y_3, X_1] &= Z_2 &[Z_2, X_1] &= Z_3. & &
				\end{align*}
Define the subalgebras $\ngo_X$ and $\ngo_Y$ as in (i), then almost the same computations show that $\ngo$ has rank $1$. Any real semisimple derivation is conjugate to a multiple of the derivation $D$ given by $D(X_i) = i X_i$, $D(Y_j)= -j Y_j$ and hence $D(Z_1) = Z_1, D(Z_2) = -2 Z_2, D(Z_3) = -Z_3$. The center is the vector space spanned by $X_7, Y_7, Z_1$ and $Z_3$ and thus $D$ has trace $0$ when restricted to the center. Note that the $\tr(D) = -2 \neq 0$.
\end{proof}

Note that none of the Lie algebras in Propositions \ref{ex1ex2ex5} and \ref{ex8ex7} can be a RN-nilradical, since every non-trivial real semisimple derivation has either a zero or negative eigenvalue on the center.  It follows from Theorem \ref{dim7} that nilpotent Lie algebras such that any real semisimple derivation has a zero eigenvalue can be RN-nilradicals.  We may ask whether the existence of a negative eigenvalue for every real semisimple derivation could be an obstruction to be a RN-nilradical.  The answer is no, as the following example shows.

\begin{example}\label{ex9}
Consider the Lie algebra $\ngo$ with nice basis $X_1, \ldots, X_{6}, Y_1, \ldots, Y_4$ and bracket
\begin{align*}
[X_1, X_2] &= X_3 &[X_1, X_3] &= X_4 &[X_1, X_4] &= X_5\\
[X_1, X_5] &= X_6 &[X_2, X_3] &= X_6 & & \\
[X_1, Y_1] &= Y_2 &[X_1, Y_2] &= Y_4 & [X_2, Y_1] &= Y_3 &[Y_1, Y_3] &= Y_4.
\end{align*}
So $\dim{\ngo}=10$, $\ngo$ is $5$-step nilpotent with descendent central series dimensions $(10,7,4,2,1)$ and $\zg(\ngo)=\la X_6,Y_4\ra$.  Let $D$ be the derivation which maps $D(X_1) =  X_1, D(X_2) = 3 X_2$ and $D(Y_1) = - Y_1$.  We show that $\ngo$ has rank $1$. Let $D^\prime$ be a real semisimple derivation which commutes with $D$. First assume that $D^\prime$ is diagonal. Write $D^\prime(X_1) = \lambda_1 X_1, D^\prime(X_2) = \lambda_2 X_2$ and $D^\prime(Y_1) = \mu Y_1$ for $\lambda_1, \lambda_2, \mu \in \mathbb{R}$. By applying $D^\prime$ to bracket $4$ and $5$, we find that $\lambda_2 = 3 \lambda_1$. Furthermore, the brackets which result to $Y_4$ show that $\mu = - \lambda_1$. So $D^\prime = \lambda_1 D$ and the claim follows.  Now let $D^\prime$ be a general real semisimple derivation which commutes with $D$. Since the basis is nice, the diagonal part also is a derivation which commutes with $D$ and hence the diagonal is equal to $\lambda_1 D$ for some $\lambda_1 \in \mathbb{R}$. Since $D^\prime$ is real semisimple, this implies that $D^\prime$ is equal to its diagonal.

Finally, we show that $\ngo$ is a RN-nilradical. Write $F_1, F_2, F_3$ for the weights corresponding to the brackets $[X_1,Y_2] = Y_4, [X_2,Y_1] = Y_3, [Y_1,Y_3] = Y_4$. Note that for
$$
M := \frac{1}{6} F_1 + \frac{2}{3} F_2 + \frac{2}{3} F_3 \in \RR_{>0}\CH_{\lb},
$$
it holds that $M(Y_1) = - \frac{4}{3} Y_1$, $M(Y_2) = - \frac{1}{6} Y_2$, $M(Y_3) = 0$, $M(Y_4) = \frac{5}{6} Y_4$ and $MX_i=m_iX_i$ with $m_i\leq 0$ for all $i$. We conclude that $D \in M + \tg^n_{>0} \subseteq  \RR_{>0}\CH_{\lb} + \tg^n_{>0} $ and thus Corollary \ref{cor-nice} implies that $\ngo$ is a RN-nilradical.
\end{example}

We now show that, unexpectedly, a characteristically nilpotent Lie algebra can admit a nice basis.

\begin{example}\label{ex4}
We give two examples of dimension $12$ with a nice basis.  The first example $\ngo_1$ has basis $X_1, X_2, X_3, Y_1, Y_2, Y_3, Z_1, Z_2, Z_3, U_1, U_2, U_3$ and bracket
\begin{align*}
[X_1, X_2] &= Y_1 &[X_2, X_3] &= Y_2  &[X_3, X_1] &= Y_3 \\
[X_1, Y_1] &= Z_1 &[X_2, Y_2] &= Z_2  &[X_3, Y_3] &= Z_3 \\
[X_1, Z_1] &= U_1 &[X_2, Z_2] &= U_2 &[X_3, Z_3] &= U_3 \\
[X_1, Y_3] &= U_3, &[X_2, Y_1] &= U_1 &[X_3, Y_2] &= U_2.
\end{align*}
So $\ngo$ is $4$-step nilpotent with descendent central series dimensions $(12,9,6,3)$ and $\zg(\ngo)=\la U_1,U_2,U_3\ra$.  Let $D$ be any derivation of $\ngo_1$. The diagonal of $D$ is again a derivation and an easy computation shows that this must be $0$.
Now write $D(X_1) = a X_2 + b X_3 + V$ and $D(Y_2) = c Y_1 + d Y_3 + e Z_1 + W$ where $V$ and $W$ are a linear combination of the other basis vectors. Consider $$0 = D([X_1, Y_2]) = [D(X_1), Y_2] + [X_1,D(Y_2)] = a Z_2 + b U_2 + c Z_1 + d U_3 + e U_1,$$ which implies that $a = b = 0$. A similar computation for $X_2$ and $X_3$ shows that $$D(\ngo_1) \subseteq [\ngo_1,\ngo_1]$$ which implies that $\ngo_1$ is characteristically nilpotent.

For the second example, we consider the Lie algebra $\ngo_2$ with basis $X_1, \ldots, X_5, Y_1, \ldots, Y_5, Z_1, Z_2$ and bracket
\begin{align*}
[X_1, X_2] &= X_3 &[X_1, X_3] &= X_4 &[X_1, X_4] &= X_5 &[X_2, X_3] &= X_5 \\
[Y_1, Y_2] &= Y_3 &[Y_1, Y_3] &= Y_4 &[Y_1, Y_4] &= Y_5 &[Y_2, Y_3] &= Y_5 \\
[X_1, Y_1] &= Z_1 &[X_2, Y_2] &= Z_1 &[X_1, Y_2] &= Z_2 &[X_2, Y_1] &= Z_2.
\end{align*}
Let $D$ be any derivation of $\ngo_2$, then again the diagonal is $0$ by an easy computation. Now write $D(X_1) = a X_2 + b Y_1 + c Y_2 + V$ and $D(X_2) = a^\prime X_1 + b^\prime Y_1 + c^\prime Y_2 + W$ with $V, W \in \gamma_2(\ngo_2)$, then
\begin{align*}
0 = D([X_1,Y_3]) = [D(X_1),Y_3] + [X_1,D(Y_3)] = b Y_4 + c Y_5 + d_1 X_4 + d_2 X_5\\
0 = D([X_2,Y_3]) = [D(X_2),Y_3] + [X_2,D(Y_3)] = b^\prime Y_4 + c^\prime Y_5 + d_3 X_4 + d_4 X_5
\end{align*} for some $d_i \in \mathbb{R}$. Hence $b = c = b^\prime = c^\prime = 0$. Now consider $$D(X_5) = D([X_2,X_3]) = [D(X_2),X_3] + [X_2,D(X_3)] = a^\prime X_4 + d^\prime X_5$$ for some $d^\prime \in \mathbb{R}$ and hence $a^\prime = 0$, since $D(\gamma_4(\ngo_2) \subseteq \gamma_4(\ngo_2)$. A similar computation for $Y_1$ and $Y_2$ shows that $D$ is nilpotent since $D^2(\ngo_2) \subseteq \gamma_2(\ngo_2)$. With some more work, one can show that $D(\ngo_2) \subseteq \ngo_2$, but since we don't need this fact, we don't give the proof here.
\end{example}

\end{document}